\def\id{{\sf Id}}
\def\scgr{{\frak S}{\frak C}{\frak G}}
\def\A{{\mathbb A}}
\def\B{{\mathbb B}}
\def\D{{\mathbb D}}
\def\G{{\mathbb G}}
\def\I{{\mathbb I}}
\def\M{{\mathbb M}}
\def\P{{\mathbb P}}
\def\Q{{\mathbb Q}}
\def\R{{\mathbb R}}
\def\S{{\mathbb S}}
\def\Z{{\mathbb Z}}
\def\Ho{{\bf Ho}}
\def\bA{{\bf A}}
\let\al\alpha
\let\d\partial
\let\x\times
\let\then\Rightarrow
\let\then\Rightarrow
\def\cok{\mathop{\sf Coker}\nolimits}
\def \ker{\mathop{\sf Ker}\nolimits}
\def \ext{\mathop{\sf Ext}\nolimits}
\def\xto#1{\xrightarrow[]{#1}}
\newtheorem{Pro}{Proposition}
\newtheorem{Le}[Pro]{Lemma}
\newtheorem{The}[Pro]{Theorem}
\newtheorem{Co}[Pro]{Corollary}
\theoremstyle{definition}
\newtheorem{De}[Pro]{Definition}
\theoremstyle{remark}
\newtheorem{Rem}[Pro]{Remark}
\def\ab{_{\mathrm{ab}}}
\def\al{{\alpha}}
\def\mhom{{\bf Hom}}
\def\type{{\sf k}}
\def\Ty{{\bf \Gamma AB}}
 \def\ta{{\frak T}}
\def\ab{{\frak A}{\frak b}}
\def\xyma{\xymatrix@M.7em}
\let\x\times
\begin{document}

\title{ On abelian 2-categories and derived 2-functors}
\author[T. Pirashvili]{Teimuraz  Pirashvili}
\address{
Department of Mathematics\\
University of Leicester\\
University Road\\
Leicester\\
LE1 7RH, UK} \email{tp59-at-le.ac.uk}
\thanks{Research was partially supported by the GNSF Grant
ST08/3-387}

\maketitle

\hfill{\emph{Dedicated to the memory  of Prof. V. K. Bentkus}}

\section{Introduction} We will assume that the reader is
familiar with the work of Mathieu Dupont on abelian 2-categories
\cite{dupont} (called $2$-abelian ${\sf Gpd}$-category in
\cite{dupont}). We do not recall this quite technical definition
and give only general remarks and few examples.

The main difference between abelian categories and abelian
2-categories is that in abelian 2-categories we have not only
objects and morphisms but we have also 2-cells or tracks between two
parallel arrows. To be more precise abelian 2-categories  are first
of all groupoid enrich categories.

Recall that a \emph{groupoid} is a small category such that all
morphisms are isomorphisms. For a groupoid $\G$ and an object $x\in
\G$ we let $\pi_0(\G)$ and $\pi_1(\G,x)$ be the set of connected
components of $\G$ and the group of automorphisms of $x$ in $\G$
respectively.

The prototype of abelian categories is the category of abelian
groups $\ab$. A similar role in the two dimensional world is plaid
by the 2-category $\scgr$ of symmetric categorical groups. The
notion of a symmetric categorical group is a categorification of a
notion of abelian group. More precisely, let $(\A, +,0,a,l,r,c)$ be
a symmetric monoidal category, where $+:\A\x\A \to \A$ is the
composition law, $0$ is the neutral element, $a$ is the associative
constrants, $c$ is the commutativity constrants and $l:\id \to
0+\id$ and $r:\id \to \id+0$ are natural transformations satisfying
well-known properties \cite{working}. We will say that $\A$ is a
\emph{symmetric categorical group} or \emph{Picard category}
provided $\A$ is a groupoid and for any object $x$ the endofunctor
$x+:\A\to \A$ is an equivalence of categories. It follows that
$\pi_0(\G)$ is an abelian group and the endofunctor $x+:\A\to \A$
yields an isomorphism $\pi_1(\G,0) \to \pi_1(\G,x)$ of abelian
groups. In what follows we will write $\pi_1(\G)$ instead of
$\pi_1(\G,0)$.

Symmetric categorical groups form the 2-category $\scgr$ which is
the prototype for abelian 2-categories. In an abelian 2-category
$\ta$ for any two object $A$ and $B$  the hom groupoid
$\mhom_\ta(A,B)$ is in fact a symmetric categorical group. Moreover
$\ta$  has kernels and cokernels in the 2-dimensional sense and
satisfies exactness properties.

For any object $A$ of an abelian 2-category $\ta$, the composition of morphisms
equips the symmetric categorical group $\mhom(A,A)$ with a
multiplication.  This two structure form a mathematical object
called 2-ring. Here a $2$-ring (called also Ann-category
\cite{quang}, or categorical ring \cite{cat_rings}) is a
categorification of the version of a ring and it consists of a
symmetric categorical group equipped with ''multiplication''
satisfying Laplaza coherent axioms \cite{lapl}. This notion with
different name already presents in the pioneering work of Takeuchi
\cite{tak}.

Large class of examples of abelian $2$-categories are given by the
2-categories of $2$-modules over $2$-rings \cite{dupont},
\cite{vincent}, \cite{china1}.

Any category can be considered as a groupoid enrich category with
trivial tracks. Hence the theory of groupoid enrich categories
generalizes usual category theory. In a sense the theory of abelian
2-categories also generalizes the theory of abelian categories, but
in a more tricky way. Let $\bf A$ be an abelian category, we wish to
associate to it an abelian 2-category. The first idea which comes in
mind is to consider $\bA$ as a groupoid enrich category with trivial
tracks. However in this way we never obtain an abelian 2-category
except the trivial case $\bA=0$ \cite{dupont}. More interesting is
the following construction. Let $\bA$ be an abelian category and
consider the 2-category $\bA^{[1]}$ consisting of arrows
$A=(A_1\xto{a} A_0)$ considered as chain complexes concentrated in
dimensions $1$ and $0$. Then morphisms in $\bA^{[1]}$ are just chain
maps and tracks are just chain homotopies. However $\bA^{[1]}$ is
usually NOT an abelian 2-category, except the case when $\bA$ is
semi-simple \cite{dupont}. To solve the dilemma, let us assume that
$\bA$ is an abelian category with enough projective objects and
consider the full 2-subcategory $\bA^{[1]}_c$ consisting of arrows
$A=(A_1\xto{a} A_0)$ with projective $A_0$. Then $\bA^{[1]}_c$ is an
abelian 2-category \cite{ab-2-ab} and this is a way how abelian
categories should be considered as a part of the theory of abelian
2-categories. To support this point of view let us observe that any
abelian 2-category $\ta$ defines the derived category $\D(\ta)$,
which is triangulated category in the usual sense \cite{verdier}. It
is easy to observe that the  derived category of $\bA^{[1]}_c$
triangulated equivalent to the classical derived category $\D(\bA)$
of $\bA$.

It is interesting to compare $\bA^{[1]}$ and $\bA^{[1]}_c$ from the
point of view of usual category theory. The underlying category of
$\bA^{[1]}$ is abelian, while the underling category of
$\bA^{[1]}_c$  has no kernels nor cokernels in general. However from
the point of view of 2-dimensional algebra $\bA^{[1]}_c$ is much
nicer. This is no accident and the reader familiar with homological
and homotopical algebra recognize the role of cofibrant objects.

Of course the dual construction works as well. Namely, if $\bA$ is
an abelian category with enough injective objects then the
2-category $\bA^{[1]}_f$ consisting of arrows $A=(A_1\xto{a} A_0)$
with injective $A_1$ is an abelian 2-category. Assuming now that
$\bA$ has enough projective and injective objects. So we could
perform both constructions $\bA^{[1]}_c$ and $\bA^{[1]}_f$ and they
looks quite different from the point of view of the usual category
theory but they are 2-equivalent abelian 2-categories (actually
using ''butterflies'' \cite{butterflies} these constructions can be
unify and get an abelian 2-category starting from an arbitrary
abelian category satisfying some smallness conditions).

It is maybe worthwhile to say that the notion of 2-equivalence is a
2-dimensional analogue of the notion of equivalence of categories.
The set of objects is NOT invariant under equivalence of categories.
Similarly the underlain category of an abelian 2-category is NOT
invariant under 2-equivalences as we have seen in the examples
$\bA^{[1]}_c$ and $\bA^{[1]}_f$. Now we give more examples of the
same phenomena.

Assume $R$ is a ring. The category $\bA$ of left $R$-modules is an
abelian category, so we can consider the corresponding abelian
2-category $\bA^{[1]}_c$. On the other hand $R$ can be seen as a
discrete 2-ring. So we have another abelian 2-category, namely the
2-category of 2-modules over $R$. These 2-categories are very
different from the point of view of the usual category theory but
they are 2-equivalent abelian 2-categories (see Corollary
\ref{r-2r}).

It is well-known and is absolutely trivial that the category of
abelian groups is isomorphic to the category of modules over the
ring of integers. Probably a similar fact is also true for symmetric
categorical groups. The role of integers should be played by the
free symmetric categorical group with one generator equipped with
appropriate multiplication. After my  suggestion Vincent Schmitt
worked on this and related problems, but unfortunately his work
\cite{vincent} in this direction is  unfinished.

However, if one considers the problem not up to isomorphisms but up
to 2-equivalences, then it can be solved quite easily. Namely, one
can consider the following symmetric categorical group $\Phi$.
Objects of $\Phi$ are integers. If $n\not =m$ then there is no
morphism from $n$ to $m$, $n,m\in \Z$. The group of automorphisms of
$n$ is the cyclic group of order two with generator $\epsilon_n$,
$n\in \Z$. The monoidal functor in $\Phi$ is induced by the addition
of integers. The associativity and unite constrants are identity
morphisms, while the commutativity constrant $n+m\to m+n$ equals to
$nm\epsilon_{n+m}$. We will see that $\Phi$ plays the same role in
$\scgr$ as the abelian group of integers plays in the category of
abelian groups in the following sense: $\Phi$ has a natural $2$-ring
structure induced by the multiplication of integers (all
distributivity constrants being trivial) and the 2-category of
2-modules over $\Phi$ in fact is 2-equivalent to the 2-category
$\scgr$ (see Proposition \ref{pi-r}).

These facts can be easily deduced  from two theorems proved below.
The first one is  a 2-dimensional analogue of the Gabriel-Mitchel
theorem (see Theorem \ref{2-g-m} below) while the second theorem
claim that abelian 2-category $\scgr$ (as well as the abelian
2-category of 2-modules over a 2-ring) has enough projective
objects. In fact it has also enough injective objects. The result on
projective and injective objects first was  proved in \cite{rpo-in}
and this works should be considered as an extended version of it. We
also added small sections on resolutions and derived 2-functors,
following to \cite{mamuka_adams}, \cite{ext-b-v} and \cite{2-ch}. In
particular we develop the theory of secondary ext objects in abelian
2-categories and we show that the cohomology of $2$-groups can be
described via such ext.

\section{Gabriel-Mitchel theorem} We start with some definitions.
Following to \cite{dupont} an \emph{additive $2$-functor} from an
abelian $2$-category $\ta$ to another abelian $2$-category $\ta_1$
is a pseudo-functor which on hom-s is a morphism of symmetric
monoidal categories. A 2-functor $F:\ta\to\ta'$ is called a
\emph{$2$-equivalence} between abelian 2-categories if the functors
$\ta(A,A')\to \ta'(FA,FA')$ are equivalences of groupoids for all
objects $A,A'$ of $\ta$ and each object $B$ of $\ta'$ is equivalent
to some object of the form $F(A)$. If $\ta$ is an abelian
$2$-category, we let $\Ho(\ta)$ be the category which has the same
objects as $\ta$, while for objects $A$ and $B$ we have
$\hom_{\Ho(\ta)}(A,B)=\pi_0{\bf Hom}_\ta(A,B)$. This category is
known as the \emph{homotopy category} of $\ta$. A morphism $f:A\to
B$ in an abelian 2-category $\ta$ is called \emph{faithful} provided
for all object $X$ the induced functor $f^X:\ta(X,A)\to \ta(X,B)$ is
faithful. An object $A$ in an abelian 2-category $\ta$ is called
\emph{injective} provided for any faithful morphism $F:\S_1\to \S_2$
the induced map
$$\hom_{\Ho(\ta)}(S_2,A)\to \hom_{\Ho(\ta)}(S_1,A)$$
is surjective. We will say that an abelian 2-category $\ta$ has
enough injective objects provided for any object $\A$ there exist a
faithful morphism $\A\to \S$ with injective object $\S$.

Dually, a morphism $f:A\to B$ in an abelian 2-category $\ta$ is
called \emph{cofaithful} provided for all object $X$ the induced
functor $f_X:\ta(A,X)\to \ta(B,X)$ is faithful. An object $A$ is
called \emph{projective} provided for any cofaithful morphism
$F:S_1\to S_2$ the induced map
$$\hom_{\Ho(\ta)}(A,S_1)\to \hom_{\Ho(\ta)}(A,S_2)$$
is surjective. We will say that an abelian 2-category $\ta$ has
enough projective objects provided for any object $A$ there exist a
cofaithful morphism $S\to A$ with projective object $S$.

A \emph{coproduct} of the family $A_i$, $i\in I$ of objects in an
abelian 2-category $\ta$ is an object $A=\bigoplus_{i\in I} A_i$
equipped with maps $\mu_i:A_i\to A$, $i\in I$ such that for any
object $X\in\ta$ the induced morphism
$$
\mu^*:\mhom_\ta(A,X)\to \prod_{i\in I}\mhom_\ta(A_i,X)
$$
is an equivalence of groupoids. In this case $A$ is coproduct of the
family $A_i$, $i\in I$ in the category $\Ho(\ta)$ as well. By
duality we can also talk on products. Observe that if $A_1$ and
$A_2$ are objects in an abelian 2-category $\ta$ then there exist a
product $A=A_1\times A_2$ which is also a coproduct. We will say
that an abelian 2-category $\ta$ has coproducts if for any family of
objects $A_i$, $i\in I$ there exists coproduct $\oplus_{i\in I}
A_i$. Dually for products. It is obvious that coproduct of
projective objects is projective and product of injective objects is
injective.

It is easy to show that the abelian 2-category $\scgr$ has all
coproducts and products. Similarly for the 2-category of 2-modules
over 2-rings.

We will say that an object $G$ is a \emph{generator} of an abelian
2-category $\ta$ provided for any object $S$ there is a diagram
$$
\xymatrix{ A\ar[r]^{f}\rruppertwocell<12>^{0}{^\hskip-2ex\alpha}
&B\ar[r]^{g} &C }
$$such that the relative cokernel of this diagram is equivalent to
$S$ and objects $A$, $B$, $C$ are coproducts of $G$. An object $S$
is \emph{small} if $\mhom_\ta(\S,-)$ preserves coproducts. The
reader familiar with the corresponding notion in the classical world
maybe find this definition ad hoc. However, it is known (see for
example pp. 52-53 in \cite{bass}) that an object $G$ of an abelian
category $\A$ is a generator if and only if any object of $\A$ is
isomorphic to the cokernel of a map $G_1\to G_0$, where $G_1 , G_2$
are coproducts of $G$.

In an abelian 2-category $\ta$ any morphism has a kernel and
cokernel in the sense of 2-categories. The cokernel of the morphism
$X\to 0$ is denoted by $\Sigma X$, while the kernel of the morphism
$0\to X$ by $\Omega X$. It is well-known that $\Omega^2=0$ and
$\Sigma^2=0$. Objects $X$ is called \emph{discrete} (resp.
\emph{codiscrete}, or \emph{connected}) if $\Omega X=0$ (resp.
$\Sigma X=0$). The full subcategory of $\Ho(\ta)$ formed by discrete
(resp. codiscrete) objects is an abelian category denoted by ${\sf
Dis}(\ta)$ (resp. ${\sf Codis}(\ta)$). It is a remarkable fact that
${\sf Dis}(\ta)$ and ${\sf Codis}(\ta)$ are equivalent categories.

The following is a 2-dimensional analogue of the classical
Gabriel-Mitchel's theorem (p.54 in \cite{bass}). To state it recall
that if $\ta$ is an additive 2-category and $M$ is an object in
$\ta$ then one has the 2-ring ${\bf End}_\ta(M):={\bf Hom}_\ta(M,
M)$ (compare \cite{dupont},\cite{vincent}) and for any object $X$,
the symmetric categorical group ${\bf Hom}(M, X)$ is a right
2-module over $h(X)={\bf Hom}_\ta(M, X)$. In this way we get a
2-functor $h$ from $\ta$ to the 2-category of 2-modules over ${\bf
End}_\ta (M)$.

\begin{The} \label{2-g-m}
Let $\ta$ be an abelian 2-category with arbitrary coproducts. If $M$
is a small projective generator in $\ta$ then 2-functor $h$ from
$\ta$ to the category of right categorical modules over the
categorical ring $R={\bf End}(M)$ is a 2-equivalence of abelian
2-categories.
\end{The}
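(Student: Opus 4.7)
The strategy follows the classical proof of Gabriel--Mitchel, lifted one dimension. I would proceed in three steps to establish local equivalence on hom-groupoids, and then a fourth step for essential surjectivity on objects.

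\textbf{Step 1 (2-Yoneda for $M$).} For any right 2-module $N$ over $R$, evaluation at the unit $1\in R$ should yield an equivalence $\mhom_R(R,N)\xrightarrow{\simeq} N$ of symmetric categorical groups; this is the 2-module analogue of the usual Yoneda identification. Specializing to $N=h(X)$ and using that $h(M)=\mhom_\ta(M,M)=R$ as a right $R$-module, we obtain an equivalence of groupoids $\ta(M,X)\simeq \mhom_R(hM,hX)$ for every $X\in\ta$.

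\textbf{Step 2 (coproducts of $M$).} Since $M$ is small, $\mhom_\ta(M,-)$ preserves coproducts, hence $h(\bigoplus_I M)\simeq \bigoplus_I R=R^{(I)}$ inside the 2-category of right 2-modules over $R$. Combined with Step 1 and the universal properties of coproducts on both sides, the functor $h$ restricts to an equivalence $\ta(\bigoplus_I M, X)\simeq \mhom_R(R^{(I)}, hX)$ for every index set $I$ and every $X$.

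\textbf{Step 3 (arbitrary $A$).} Since $M$ is a generator, every object $A\in\ta$ is the relative cokernel of a diagram $A_1\xrightarrow{f} A_2\xrightarrow{g} A_3$ equipped with a 2-cell $\alpha\colon 0\Rightarrow gf$, in which each $A_i$ is a coproduct of copies of $M$. Both the contravariant 2-functors $\ta(-,X)$ and $\mhom_R(h-,hX)$ transform such relative cokernels into the corresponding relative kernels in $\scgr$, by the 2-exactness built into Dupont's axioms; projectivity of $M$ is what guarantees the $\pi_0$-surjectivity needed to make this "left exactness on representables" detect $A$ on the nose. Since the two sides agree on $A_1,A_2,A_3$ by Step 2 and both convert the above diagram into the same relative kernel in $\scgr$, they agree on $A$. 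This gives the local equivalence $\ta(A,B)\simeq \mhom_R(hA,hB)$ for all $A,B$.

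\textbf{Step 4 (essential surjectivity).} One first checks that the 2-category of right 2-modules over $R$ has $R$ itself as a small projective generator; this is formal from the 2-module axioms. Given an arbitrary right 2-module $N$, choose a presentation of $N$ as a relative cokernel of a diagram $R^{(I)}\to R^{(J)}\to R^{(K)}$ with a 2-cell. Using Step 2, lift this diagram to a diagram in $\ta$ between coproducts of $M$, form its relative cokernel $A$ in $\ta$, and invoke the exactness established in Step 3 to conclude $h(A)\simeq N$.

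The main obstacle is the careful handling of 2-exactness in Step 3: one must verify, inside Dupont's framework, that representable 2-functors out of an abelian 2-category send relative cokernels to relative kernels, and that projectivity of $M$ really supplies the $\pi_0$-level surjectivity needed so that this left-exactness, combined with the smallness and generator hypotheses, pins down $h(A)$ up to equivalence. Everything else is bookkeeping against the 2-categorical versions of the universal properties used in the classical theorem.
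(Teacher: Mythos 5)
Your proposal follows essentially the same route as the paper's proof: Yoneda for $M$, smallness to handle coproducts of $M$, left exactness of the representable 2-functors (with projectivity of $M$ making $h=\mhom_\ta(M,-)$ exact) plus the generator hypothesis to get the hom-groupoid equivalences, and then essential surjectivity by presenting an arbitrary 2-module as a relative cokernel of free modules $R^{(I)}$ and lifting that diagram to $\ta$. The decomposition into these steps and the way each hypothesis is used match the paper's argument.
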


\begin{proof} Our argument is almost literary follows to pp. 54-55 in
\cite{bass}. The fact that if $R$ is a 2-ring, then $R$ considered
as a right $R$-module is a small projective generator is an easy
consequence of the 2-dimensional Yoneda lemma. Assume now that $\ta$
satisfies the conditions of the theorem. We need to establish two
facts. Firstly, for any objects $X,Y$ in $\ta$ the functor
$$h_{X,Y}:{\bf Hom}_\ta(X,Y)\to {\bf Hom}_R(hX,hY)$$
is an equivalence of categories and secondly every 2-module $Z$ is
equivalent to some $h(X)$.

We start to verify the first assertion. We fix $Y$ and consider
$h_{X,Y}$ as a natural transformation $\alpha:T\to S$, where
$T(X)={\bf Hom}_\ta(X,Y)$ and $S(X)={\bf Hom}_R(hX,hY)$. Observe
that if $X=M$ then $\alpha(X)$ is an equivalence of categories by
Yoneda and because of smallness assumption it is also an equivalence
for $X$ isomorphic to a coproduct of $M$. Since hom-s are left exact
and ${\bf Hom}_\ta(M,-)$ is an exact functor (due to projectivity of
$M$) it follows that both 2-functors $T$ and $S$ are left exact as
well. Since $M$ is a generator it follows that $\alpha(X)$ is an
equivalence of categories for any $X$.

To see the second assertion, we use the fact that $R$ is a generator
in the 2-category of $R$-modules, hence we can write $Z$ as a
relative cokernel of a diagram
$$
\xymatrix{
R^{(I_1)}\ar[r]^{f}\rruppertwocell<12>^{0}{^\hskip-2ex\alpha}
&R^{(I_2)}\ar[r]^{g} &R^{(I_3)}}
$$
where $R^{(I)}$ denotes the coproduct of $I$-copies of $R$. Since
$$\mhom_R((hM)^{(I_i)}, ((hM)^{(I_j)})\cong \mhom_R((h(M^{(I_i)}), (h(M^{(I_j)}))$$
we see that the diagram is equivalent to one which comes from a
similar diagram in $\ta$ by applying the 2-functor $h$ and again by
exactness of $h$ we see that $Z$ is equivalent to one of the form
$h(X)$.
\end{proof}
 As an immediate consequence of this theorem we see that in
 terminology \cite{dupont} any 2-abelian ${\sf Gpd}$-category which
 posses a small projective generator and arbitrary coproducts is
 automatically good 2-abelian ${\sf Gpd}$-category. Another
 important consequence is the following.

\begin{Co}\label{r-2r} Let $R$ be a classical ring and $\A$ be the abelian
category of classical $R$-modules. Then the abelian 2-category
$\A^{[1]}_c$ is equivalent to the 2-category of 2-modules over $R$
considered as a discrete 2-ring.
\end{Co}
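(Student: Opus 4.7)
The strategy is to apply Theorem \ref{2-g-m} to $\bA^{[1]}_c$, with candidate small projective generator $M := (0 \to R)$. A direct computation identifies the 2-ring ${\bf End}_{\bA^{[1]}_c}(M)$ with $R$ viewed as a discrete 2-ring: a chain map $M \to M$ is an $R$-linear map $R \to R$, i.e., an element of $R$; a chain homotopy between two such maps would be a map $R \to 0$ and hence trivial, forcing the maps to coincide; and composition of chain maps is the multiplication of $R$. Thus the target of the 2-functor $h$ of Theorem \ref{2-g-m} is precisely the 2-category of 2-modules over $R$ regarded as a discrete 2-ring, and the corollary reduces to checking that $M$ is a small projective generator and that $\bA^{[1]}_c$ admits arbitrary coproducts.

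Coproducts in $\bA^{[1]}_c$ exist degreewise (the bottom remains projective). Smallness of $M$ is immediate from the description that $\mhom(M,(Y_1\xto{d} Y_0))$ is the groupoid with objects $Y_0$ and morphisms $f\to g$ given by $h\in Y_1$ with $dh=g-f$, a construction that commutes with direct sums in $Y$. From this description one reads off $\pi_0\mhom(M,-)=H_0$ and $\pi_1\mhom(M,-)=H_1$, while an elementary check shows that cofaithful morphisms in $\bA^{[1]}_c$ are precisely those inducing a surjection on $H_0$; so $M$ is projective.

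The generator property is the substantive step. Given $(A_1\xto{a} A_0)\in\bA^{[1]}_c$, we use projectivity of $A_0$ twice. First, choose $C$ so that $R^{(I_3)}:=A_0\oplus C$ is free; the natural map $(A_1\xto{a} A_0)\hookrightarrow (A_1\oplus C\xto{a\oplus\Id} R^{(I_3)})$ is a chain homotopy equivalence (the $C$-summand is contractible), so up to equivalence in $\bA^{[1]}_c$ we may assume the bottom term is free. Second, choose a free presentation $R^{(I_1)}\xto{\phi} R^{(I_2)}\twoheadrightarrow A_1\oplus C$, and set $f := \phi: M^{(I_1)}\to M^{(I_2)}$ and $g:M^{(I_2)}\to M^{(I_3)}$ to be the composite $R^{(I_2)}\twoheadrightarrow A_1\oplus C\xto{a\oplus\Id} R^{(I_3)}$. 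Then $gf=0$, and the $2$-cell $\alpha:0\Rightarrow gf$ is forced to be trivial since $M^{(I_3)}$ is concentrated in degree $0$; a direct check of the universal property identifies the relative cokernel with $(\cok(\phi)\to R^{(I_3)}) = (A_1\oplus C\to R^{(I_3)})$, equivalent to $(A_1\to A_0)$. Theorem \ref{2-g-m} now yields the claimed 2-equivalence. The main obstacle is the first half of this generator step: coproducts of $M$ produce only free modules in the bottom degree, so the argument hinges on using projectivity of $A_0$ to replace it by a free term up to equivalence before the otherwise elementary relative-cokernel computation can conclude.
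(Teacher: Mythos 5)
Your proof is correct and follows essentially the paper's own route: apply Theorem \ref{2-g-m} to $M=R[0]=(0\to R)$, identify ${\bf End}(M)$ with $R$ viewed as a discrete 2-ring, and exhibit an arbitrary object $(A_1\to A_0)$ as the relative cokernel of a diagram built from a resolution of $A_1$ by coproducts of $R[0]$. The only divergence is your stabilization by a contractible summand $(C\xto{\id}C)$ to make the degree-zero terms genuinely free, hence literal coproducts of $R[0]$; the paper instead takes a projective resolution $P_*\to A_1$ and uses $P_1[0],P_0[0],A_0[0]$ directly (projective but not necessarily free) and declares the relative-cokernel identification obvious, so your extra step is a careful patch of that small glossed-over point rather than a different argument.
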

\begin{proof} Consider the following object $R[0]=(0\to R)$ of the abelian
2-category $\A^{[1]}_c$. It is obvious that $R[0]$ is a small and
projective in the abelian 2-category $\A^{[1]}_c$, to show that it
is also a generator take any object $A=(A_1\to A_0)$ in $\A^{[1]}_c$
and consider   a projective resolution $P_*\to A_1$  in the
classical sense. Based on the description  of cokernels of morphism
of $\A^{[1]}_c$ given in \cite{ab-2-ab} it is obvious that  $A$ is
equivalent to the relative cokernel of the diagram
$$
\xymatrix{ P_1[0]\ar[r]\rruppertwocell<12>^{0}{^\hskip-2ex\alpha}
&P_0[0]\ar[r] & A_0[0] }
$$
where $\alpha$ is the trivial track.  So we can apply Theorem
\ref{2-g-m} and the fact that the 2-ring ${\bf
Hom}_{\A^{[1]}_c}(P[0],P[0])$ is isomorphic to $R$ considered as a
discrete 2-ring to finish the proof.
\end{proof}

\section{Morita Equivalence}

Two 2-rings $\R$ and $\S$ are called \emph{Morita equivalent}
provided the 2-categories of 2-modules over $\R$ and $\S$ are
2-equivalent. Based on the tensor product developed by Vincent
Schmitt \cite{vincent} lots of materials on Morita contexts
\cite{bass} have 2-dimensional analogues. Details left to the
interesting readers. Here we restrict ourself only with the
following consequence of our version of the Gabriel-Mitchel theorem.

\begin{The} $\R$ and $\S$ are Morita equivalent iff there exist a
small projective generator $P$ in the 2-category of 2-modules over
$\R$ such that two rings $\S$ and $\mhom_\R(P,P)$ are equivalent.
\end{The}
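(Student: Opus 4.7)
The plan is to deduce both directions from the 2-dimensional Gabriel--Mitchel theorem (Theorem \ref{2-g-m}) proved above, mimicking the well-known one-dimensional argument in Bass \cite{bass}. The engine in both directions is the observation, already used in the proof of Theorem \ref{2-g-m}, that any 2-ring $\R$ considered as a module over itself is a small projective generator in the 2-category $\R\mod$ of 2-modules over $\R$, and that its endomorphism 2-ring is $\R$ itself (this is the 2-dimensional Yoneda lemma).

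For the $(\Leftarrow)$ direction, suppose $P$ is a small projective generator in $\R\mod$ and that $\mhom_\R(P,P)$ is equivalent to $\S$ as 2-rings. Since $\R\mod$ is an abelian 2-category with arbitrary coproducts, Theorem \ref{2-g-m} applies with $\ta = \R\mod$ and $M = P$, yielding a 2-equivalence
$$h=\mhom_\R(P,-):\R\mod\;\longrightarrow\; \mhom_\R(P,P)\mod.$$
An equivalence of 2-rings $\mhom_\R(P,P)\simeq\S$ induces a 2-equivalence of the corresponding 2-categories of 2-modules, and composing gives a 2-equivalence $\R\mod\simeq\S\mod$, i.e.\ $\R$ and $\S$ are Morita equivalent.

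For the $(\Rightarrow)$ direction, suppose $F:\S\mod\to\R\mod$ is a 2-equivalence of abelian 2-categories, and set $P := F(\S)$, where $\S$ is viewed as the free rank-one module over itself. Since $F$ is a 2-equivalence, it preserves all properties that can be phrased 2-categorically: it preserves coproducts, hence faithfulness/cofaithfulness, hence projectivity, and it takes generators to generators and small objects to small objects. Therefore $P$ is a small projective generator in $\R\mod$. Moreover, as $F$ is fully faithful on hom-groupoids in the 2-categorical sense and compatible with composition, it induces an equivalence of 2-rings
$$\mhom_\R(P,P)\;=\;\mhom_\R(F\S,F\S)\;\simeq\;\mhom_\S(\S,\S)\;\simeq\;\S,$$
which is what was to be shown.

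The only non-routine point is the verification that the properties ``small'', ``projective'' and ``generator'' are invariant under 2-equivalence of abelian 2-categories; this will be the main (but mild) obstacle, because these notions are defined in terms of the homotopy category $\Ho(\ta)$, faithful/cofaithful morphisms, coproducts and relative cokernels, and one has to check that each of these notions is preserved by a 2-equivalence. All such checks are direct from the definitions recalled in Section~2, using that a 2-equivalence is, by definition, an equivalence on hom-groupoids and essentially surjective on objects, so it preserves and reflects all the 2-categorical structure involved in those definitions.
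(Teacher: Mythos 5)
Your proposal is correct and follows essentially the same route as the paper: the paper's proof consists precisely of the observation that small projective generators correspond to small projective generators under 2-equivalences (your $(\Rightarrow)$ direction, with $P=F(\S)$) together with an application of Theorem \ref{2-g-m} (your $(\Leftarrow)$ direction). You simply spell out in more detail the invariance checks and the step that an equivalence of 2-rings induces a 2-equivalence of their 2-module 2-categories, both of which the paper leaves implicit.
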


\begin{proof} It suffices to observe that under 2-equivalences small projective generators corresponds
to small projective generators and then use Theorem \ref{2-g-m}.
\end{proof}

\section{2-chain complexes}\label{2-complex}
In this section we fix an abelian 2-category $\ta$. In particular
for morphisms $f,g:A\to B$ there are defined morphisms $f+g:A\to B$
and $-f:A\to B$.

A \emph{2-chain complex} $(A_*,d,\partial)$ in $\ta$ is a diagram of
the form
$$
\xymatrix{ \cdots\ar[r]\rrlowertwocell<-12>_{0}{}
&A_{n+2}\ar[r]|-{d_{n+1}}\rruppertwocell<12>^{0}{^\partial_n}
&A_{n+1}\ar[r]|-{d_n}\rrlowertwocell<-12>_{0}{_{}{\hskip1.2em\d_{n-1}}}
&A_n\ar[r]|-{d_{n-1}}\rruppertwocell<12>^{0}{^{}} &A_{n-1}\ar[r]
&\cdots }
$$
i.~e., a sequence of objects $A_n$, maps $d_n:A_{n+1}\to A_n$ and
tracks $\partial_n:d_nd_{n+1}\then0$, $n\in\Z$, such that for each
$n$ the tracks
$$
\xymatrix{d_{n-1}d_nd_{n+1}\ar@{=>}[r]^{d_{n-1}\d_n}&d_{n-1}0\ar@{=>}[r]^\equiv&0}
$$
and
$$
\xymatrix{d_{n-1}d_nd_{n+1}\ar@{=>}[r]^{\partial_{n-1}d_{n+1}}&0d_{n+1}\ar@{=>}[r]^\equiv&0}
$$
coincide.

For any 2-chain complex $(A_*,d,\partial)$ and any integer $n$,
there is a well-defined object called $n$-th homology ${\bf
H}_n(A_*)$ of $A_*$ (see p. 138 \cite{dupont}). Following to
\cite{mamuka_adams} we call ${\bf H}_*(A_*)$ the \emph{secondary
homology} of $A_*$. Let us recall the definition. Let ${\bf
Z_n}(A_*)$ be the relative kernel $\ker(d_n,\partial_{n-1})$. Then
we get a natural morphism $k: A_{n+1}\to {\bf Z_n} $ and by the
definition the secondary homology ${\bf H}_n$ is the relative
cokernel of the diagram
$$
\xymatrix{
\bA_{n+2}\ar[r]^{d_{n+1}}\rruppertwocell<12>^{0}{^\hskip-2ex\alpha}
&A_{n+1}\ar[r]^{k} & {\bf Z_n}}
$$
In the abelian 2-category $\scgr$ we put $$H^n(A_*):=\pi_0({\bf
H}_n(\A_*)$$ These groups are known as \emph{Takeuchi-Ulbrich
homology} \cite{2-ch}.

One of the main properties of the secondary homology is that it
associates to an extension of 2-chain complexes a long 2-exact
sequence of secondary homologies \cite{2-ch}. This implies also
usual exactness of the long exact sequence of Takeuchi-Ulbrich
homologies. The following easy lemma on 2-exact sequences will be
useful in the section on derived 2-functors.

\begin{Le}\label{2z} If $$\cdots \to A_n\xto{f_n} B_n\xto{g_n} C_n\xto{h_n} A_{n-1}\to\cdots$$
is a 2-exact sequence of symmetric categorical groups, then
$$\pi_0(A_n)\cong \pi_1(A_{n-1})$$ If $\pi_1A_n=0$ (resp.
$\pi_0A_{n-1}=0$)  then $B_n\cong Ker(h_n)$ (resp. $C\cong
\cok(f_n)$) .
\end{Le}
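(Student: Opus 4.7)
My approach rests on the central homological tool for 2-exact sequences in $\scgr$ established in \cite{2-ch}: any 2-exact triple $X\to Y\to Z$ of symmetric categorical groups induces a six-term exact sequence of abelian groups
\[
\pi_1X\to\pi_1Y\to\pi_1Z\to\pi_0X\to\pi_0Y\to\pi_0Z,
\]
whose connecting homomorphism $\pi_1Z\to\pi_0X$ is produced by the 2-categorical identification of 2-images with 2-kernels supplied by 2-exactness. I would apply this fact to each consecutive triple in the long 2-exact sequence of the lemma and splice the resulting six-term sequences into one long exact sequence of abelian groups interleaving $\pi_0$ and $\pi_1$. The first assertion then falls out by reading off $\pi_0(A_n)$ as an appropriate kernel/cokernel which, courtesy of the splicing, coincides with $\pi_1(A_{n-1})$.

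For the second assertion I would work directly with 2-kernels and 2-images. The track $\partial_n\colon h_ng_n\Rightarrow 0$ yields a canonical factorization $\tilde g_n\colon B_n\to\ker(h_n)$. By 2-exactness at $C_n$, the 2-image of $g_n$ is precisely $\ker(h_n)$, so $\tilde g_n$ is essentially surjective. Its 2-kernel is $\ker(g_n)$, which by 2-exactness at $B_n$ is equivalent to $\im(f_n)\simeq\cok(\ker(f_n)\to A_n)$, a quotient of $A_n$. Assuming $\pi_1(A_n)=0$, I would combine the six-term sequence for the short 2-exact sequence $\ker(f_n)\to A_n\to\im(f_n)$ with 2-exactness at $A_n$ (which identifies $\ker(f_n)\simeq\im(h_{n+1})$) to deduce $\im(f_n)\simeq 0$, whence $\tilde g_n$ is an equivalence and $B_n\simeq\ker(h_n)$. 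The cokernel claim under $\pi_0(A_{n-1})=0$ follows by a symmetric argument via the factorization $B_n\to\cok(f_n)\to C_n$ and 2-exactness at $B_n$ and $A_{n-1}$.

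The main obstacle I anticipate lies in the splicing step: the six-term sequences coming from adjacent triples overlap in multiple terms rather than abutting cleanly at a single term, so assembling them into a single long exact sequence requires careful diagram chasing and a precise choice of connecting maps. A related subtlety appears in the kernel claim, where a priori the six-term sequence only yields $\pi_1(\im f_n)\hookrightarrow\pi_0(\ker f_n)$; deducing the outright vanishing of $\im(f_n)$ from the single hypothesis $\pi_1(A_n)=0$ seems to require drawing on 2-exactness further up the sequence to pin down the relevant homotopy groups of $\ker(f_n)$.
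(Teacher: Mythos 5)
The paper states this lemma without proof, so I am judging your plan on its own terms; unfortunately both halves have real problems. For the first claim, the six\,-term sequences you invoke do exist (for a 2-exact triple $X\to Y\to Z$ one can define a connecting map $\pi_1Z\to\pi_0X$ through the bijection $\pi_0X\cong\pi_0\Ker(G)$ supplied by 2-exactness), but they cannot deliver the conclusion. They do not splice into a single long exact sequence, and even taken all together they only show that $\pi_0(A_n)$ and $\pi_1(A_{n-1})$ are \emph{both} extensions of $\ker(\pi_0 g_n)$ by the cokernel of $\pi_1(g_n)$; no arrow between these two groups ever appears in your diagrams, and two extensions with the same subgroup and quotient need not be isomorphic, so nothing ``falls out by reading off'' --- in particular $\pi_0(A_n)$ is not a kernel or cokernel of maps of homotopy groups, only an extension. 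The missing ingredient is a comparison made at the level of the categorical groups themselves: 2-exactness at $B_n$ makes the comparison $A_n\to\Ker(g_n)$ full and essentially surjective, hence bijective on $\pi_0$, so $\pi_0(A_n)\cong\pi_0\Ker(g_n)$; 2-exactness at $C_n$, in its equivalent cokernel formulation, makes $\cok(g_n)\to A_{n-1}$ full and faithful, hence bijective on $\pi_1$, so $\pi_1(\cok(g_n))\cong\pi_1(A_{n-1})$; and finally $\pi_0\Ker(g_n)\cong\pi_1\cok(g_n)$ canonically, as one sees directly from the constructions (both are represented by pairs $(b,\gamma\colon g_n(b)\to 0)$ modulo the same equivalence). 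That last identification is genuinely extra input, not a formal consequence of exact sequences of $\pi_0$'s and $\pi_1$'s.

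For the second claim your intermediate step is false: from $\pi_1(A_n)=0$ you cannot deduce that the 2-kernel of $g_n$ (your $\Im(f_n)$) vanishes. Take the nine-term 2-exact sequence of an extension $A\to B\to C$ with $A$ discrete and nonzero; in the stretch $A\to B\to C\to\Sigma A$ one has $\pi_1(A)=0$ while $\Ker(g)\simeq A\neq 0$, yet the lemma's conclusion $B\simeq\Ker(C\to\Sigma A)$ holds. The error is harmless only because the step is unnecessary: $\tilde g_n\colon B_n\to\Ker(h_n)$ is already full and essentially surjective by 2-exactness at $C_n$, so it is an equivalence as soon as it is faithful, i.e.\ as soon as $\pi_1(g_n)$ is injective; and this is immediate from exactness of $\pi_1(A_n)\to\pi_1(B_n)\to\pi_1(C_n)$ at the middle term together with $\pi_1(A_n)=0$. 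The dual argument (essential surjectivity of $\cok(f_n)\to C_n$ from exactness of the $\pi_0$-row and $\pi_0(A_{n-1})=0$) handles the cokernel statement. So the overall strategy can be repaired, but as written the kernel/cokernel half rests on a false reduction, and the first half is missing the one identification that actually proves it.
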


If $A_*$ and $B_*$ are 2-chain complexes in a 2-abelian category
$\ta$, one can introduce a new 2-chain complex $({\bf Hom}(A_*,B_*),
d,
\partial)$ whose $n$-th component is the product $${\bf
Hom}(A_*,B_*)_n=\prod_{p}{\bf Hom}_\ta(A_p, B_{n+p})$$ The boundary
map
$$d:{\bf Hom}(A_*,B_*)_n\to {\bf Hom}(A_*,B_*)_{n-1}$$ is given by
$$(df)_{p}=df_p+(-1)^{p+1}f_{p-1}d$$
with following track
$$dd(f)\then d^2f+fd^2\then 0$$ where the first track comes
from the distributivity law, while the second is the sum
$f^*(\partial)+f_*(\partial)$.

The objects of the symmetric categorical group ${\bf H}_0({\bf
Hom}(A_*,B_*))$ are known as 2-chain maps from $A_*$ to $B_*$, while
morphisms of ${\bf H}_0({\bf Hom}(A_*,B_*))$ form tracks of 2-chain
maps. We let ${\bf 2Chain}(\ta)$ be the 2-category of 2-chain
complexes, 2-chain maps and tracks of 2-chain maps. The
corresponding homotopy category $\Ho({\bf 2Chain}(\ta))$ is the
homotopy category of 2-chain complexes. Thus a 2-chain map is a pair
$(f,\phi)$, where $f=(f_n)$ is a sequence of morphisms $f_n:A_n\to
B_n$ and $\phi=(\phi_n)$ is the sequence of tracks $f_n d_n\then d_n
f_{n+1}$ satisfying an obvious coherence condition, similarly tracks
$(f,\phi)\then (f',\phi')$ are also pairs $(h,\psi)$, where
$h=(h_n)$ is the sequence of maps $h_n:A_n\to B_{n+1}$ and
$\psi=(\psi_n)$ is the sequence of tracks $d_nh_n+f_n\then
g+h_{n-1}d_{n-1}$ satisfying  an obvious coherence condition.

The secondary homology ${\bf H}_n$ defines an additive 2-functor
$${\bf H}_n:{\bf 2Chian}(\ta)\to \ta$$
In particular the Tackeuchi-Ulbrich homology is homotopy invariant,
meaning that it factors trough the homotopy category $\Ho({\bf
2Chain}(\ta))$ of 2-chain complexes.

Observe that if $T:\ta\to \ta'$ is an additive 2-functor between
abelian 2-categories it induces a well-defined additive 2-functor
${\bf 2Chian}(\ta)\to {\bf 2Chian}(\ta')$, which is two-dimensional
analogue on the degreewise action of an additive functor on chain
complexes.

A 2-chain map $(f_*, \phi_*):A_*\to B_*$ is called \emph{weak
equivalence} provided the induced morphism in secondary homology
${\bf H_*}(A_*)\to {\bf H_*}(B_*)$ is an equivalence. We let
$\D(\ta)$ be the localization of the category $\Ho({\bf
2Chain}(\ta))$ with respect to weak equivalences. Here we emphasise
that the category $\D(\ta)$ exists provided $\ta$ posses countable
coproducts and have enough projective objects. This can be proved
essentially by repeating the argument of Samson Saneblidze given in
\cite{samson}. It is not surprising that the category $\D(\ta)$ when
it exists has a canonical triangulated category structure which is
induced by the following mapping cone construction. Let $(f_*,
\phi_*):(A_*,d^A,\partial^A) \to (B_*,d^B,\partial^B) $ be a 2-chain
map. Define 2-chain complex $(C_*, d,\partial)$ by
$$C_n=A_{n-1}\oplus B_n$$
$$d_n=\begin{pmatrix}-d^A&f\\0&d^B\end{pmatrix}$$
while $\partial d^2\then 0$ is the composite of the canonical track
(coming from the distribution) $d^2\then \begin{pmatrix}d^2&-df+fd\\
0&d^2\end{pmatrix}$ and by the track $\begin{pmatrix} \partial^\A&
\phi\\ 0&\partial^B\end{pmatrix}$. The triangulated category
$\D(\ta)$ is called the derived category of an abelian 2-category
$\ta$.

We claim that this construction is not only an analogue of the
classical construction of derived categories of abelian categories
but also  generalizes it. In fact, if $\bA$ is an abelian category
with enough projective objects then the derived category $\D(\bA)$
in the classical sense and the derived category $\D(\bA^{[1]}_c)$ in
the new sense are triangulated equivalent. The equivalence is given
by the following functors. Define a triangulated functor
$\D(\bA^{[1]}_c)\to \D (\bA)$ by the ''total complex'' construction
(see for example Example 2. 11 in \cite{mamuka_adams}). The functor
in opposite direction is given as follows. If $X_*$ is a chain
complex in $\bA$, first we have to replace it by a weak equivalent
one which consists of projective objects (here one needs to assume
that $\bA$ has countable coproducts) and then apply the functor
$P\mapsto P[0]$ to obtain a 2-chain complex. Here $P[0]=(0\to P)$ is
an object in $(\bA)^{[1]}_c$. One sees that these constructions are
mutually quasi-inverse functors.

It is a remarkable fact that the construction of  a triangulated
category from an abelian 2-category is intimately related to the
derived category construction in the brave new algebra. Namely, if
$R$ is a 2-ring and $H(R)$ is the corresponding ring-spectrum
constructed in \cite{cat_rings} then the derived category of
2-modules is triangulated equivalent to the derived category of
$H(R)$ in the brave new-algebra sense. The proof of this fact will
appear in a forthcoming paper \cite{2-robi}. In particular, there is
a ring spectrum $\Lambda=H(\Phi)$, which is characterized by the
following properties: $\pi_0(\Lambda)=\Z$, $\pi_1(\Lambda)=\Z/2\Z$,
$\pi_i(\Lambda)=0$, if $i\not =0,1$ and the first Postnikov
invariant (as a ring spectrum) of $\Lambda$ is the generator of the
third MacLane (i.e. topological Hochschild) cohomology ${\bf
HML}^3(\Z,\Z/2\Z)=\Z/2\Z$. Then $\D(\scgr)$ and $\D(\Lambda)$ are
triangulated equivalent.

\section{Resolutions and Derived 2-functors} Let $\ta$ be an abelian 2-category with enough projective
objects and $A$ be an object in $\ta$. Following to
\cite{mamuka_adams} a \emph{left $A$-augmented 2-chain complex} is a
2-chain map $(\epsilon, \hat{\epsilon}):A_*\to A$ of 2-chain
complexes, where $A$ is considered as a 2-chain complex concentrated
in dimension 0 with trivial differentials $d=0, \partial=0$ and
$(A_*,d,\partial)$ is a 2-chain complex with $A_n=0$, $n<0$.
Moreover $\partial_n$ equal to the identity track for $n<0$
\cite{mamuka_adams}. A left $A$-augmented 2-chain complex
$(\epsilon, \hat{\epsilon}):A_*\to A$ is called \emph{left
resolution} provided $(\epsilon, \hat{\epsilon})$ is a weak
equivalence. A left resolution is called \emph{projective
resolution} provided $A_n$ is projective for all $n\geq 0$. This
notion is a particular case of a more general notion given in
\cite{mamuka_adams} and corresponds to the case when (in the
notations \cite{mamuka_adams}) $\bf b$ coincides with the class of
all projective objects.

Then we can summarize some results of \cite{mamuka_adams} in the
following proposition.
\begin{Pro} Let $\ta$ be an abelian 2-category with enough projective objects and let
${\bf PR }$ be the full 2-subcategory of the 2-category ${\bf
2Chian}(\ta)$ consisting of projective resolutions. Then the
2-functor given by taking the $0$-th secondary homology defines a
2-equivalence $${\bf PR}\to\ta$$
\end{Pro}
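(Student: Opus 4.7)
My plan is to establish the two conditions for $\mathbf{H}_0:\mathbf{PR}\to\ta$ to be a 2-equivalence, namely essential surjectivity on objects and equivalence on hom-groupoids.

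For essential surjectivity I construct a projective resolution of any given $A\in\ta$ by induction on degree. Enough projectives supplies a cofaithful $\epsilon:P_0\to A$ with $P_0$ projective. Inductively, assume $P_0,\dots,P_n$, differentials $d_i$ and tracks $\partial_i$ have been chosen so that the truncated 2-chain complex computes $A$ in degree $0$ and has trivial secondary homology in degrees $1,\dots,n-1$. I form the relative kernel $Z_n=\ker(d_{n-1},\partial_{n-2})$ (with the convention $d_{-1}:=\epsilon$), choose a cofaithful $P_{n+1}\to Z_n$ with $P_{n+1}$ projective, and set $d_n:P_{n+1}\to P_n$ to be the composite with $Z_n\to P_n$. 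The track $\partial_n$ is produced by the universal property of the relative kernel. Since $P_{n+1}\to Z_n$ is cofaithful, the relative cokernel computing $\mathbf{H}_n(P_*)$ is trivial, so the resulting augmented complex is a projective resolution with $\mathbf{H}_0(P_*)\simeq A$.

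For the hom-groupoid equivalence, given projective resolutions $(P_*,\epsilon)\to A$ and $(Q_*,\eta)\to B$, I lift any $f:A\to B$ inductively. Projectivity of $P_0$ applied to the cofaithful $\eta$ furnishes $f_0:P_0\to Q_0$ together with a track $\phi_0:\eta f_0\Rightarrow f\epsilon$. Given $(f_0,\dots,f_n)$ and coherent tracks $(\phi_0,\dots,\phi_n)$, the composite $f_n d_n^P$, pasted with $\partial^P_{n-1}$, $\phi_{n-1}$ and $\partial^Q_{n-1}$, defines a morphism $P_{n+1}\to Z_n^Q$ into the relative kernel in $Q_*$; since $\mathbf{H}_n(Q_*)=0$ and $P_{n+1}$ is projective, it lifts to $f_{n+1}:P_{n+1}\to Q_{n+1}$ together with the required $\phi_{n+1}$. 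The analogous construction one dimension higher shows that every 2-cell $\alpha:f\Rightarrow g$ in $\ta(A,B)$ lifts to a chain homotopy, that two chain lifts of the same morphism are chain homotopic, and that two chain homotopies with the same induced 2-cell are related by a 2-track. Together these statements give essential surjectivity, fullness and faithfulness of the induced functor of groupoids.

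The main obstacle will be the coherent management of tracks through the induction. At each stage one simultaneously produces a 1-cell \emph{and} a 2-cell, and must verify that the new track is compatible with $\partial^P$, $\partial^Q$ and the previously constructed $\phi_i$ so that the 2-chain-map (or chain-homotopy) axioms propagate to the next level. Projectivity has to be invoked in its full 2-dimensional form: $\mathbf{Hom}(P,-)$ preserves relative kernels and sends cofaithful morphisms to $\pi_0$-surjective, hom-faithful functors, which is exactly what converts the factorization problem at stage $n+1$ into a solvable lifting problem against a cofaithful map. Once the coherence pastings are written out explicitly, the induction is a transcription of the classical comparison theorem for resolutions, but the book-keeping of tracks is the genuinely new technical content.
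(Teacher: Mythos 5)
Your overall route is the right one, and it is essentially the paper's route: the paper gives no proof of this proposition beyond citing \cite{mamuka_adams} and recalling exactly the inductive construction of projective resolutions that you use for essential surjectivity, while the hom-groupoid comparison you sketch (lifting 1-cells and 2-cells between resolutions, with the uniqueness statements and the coherence of tracks as the genuine technical content) is what the cited reference carries out.

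One step, as you state it, is false and needs repair: cofaithfulness of $k_n\colon P_{n+1}\to Z_n$ does \emph{not} make the relative cokernel computing $\mathbf{H}_n(P_*)$ trivial; it only makes it connected. Already in $\scgr$ the cokernel of the reduction $\Z\to\Z/2$, viewed as a morphism of discrete symmetric categorical groups, is cofaithful but has $\pi_1\cong\Z\neq 0$. In general $\pi_0$ of the relative cokernel is $\mathrm{coker}(\pi_0 k_n)$, which your cofaithfulness kills, but its $\pi_1$ is the cokernel of $\pi_0(P_{n+2})\to\pi_0\,\mathbf{Ker}(k_n)$, so acyclicity in degree $n$ is not secured by the choice made at stage $n+1$; it is secured by the choice made at stage $n+2$. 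The repair is standard: since $Z_n\to P_n$ is faithful, the relative kernel $Z_{n+1}=\ker(d_n,\partial_{n-1})$ is equivalent to $\mathbf{Ker}(k_n)$, and under this identification the map $P_{n+2}\to\mathbf{Ker}(k_n)$ induced by $d_{n+1}$ and $\partial_n$ is your cofaithful $P_{n+2}\to Z_{n+1}$, whence $\pi_1\mathbf{H}_n(P_*)=0$ as well; the analogous bookkeeping at the bottom (cofaithfulness of $\epsilon$ together with cofaithfulness of $P_1\to\mathbf{Ker}(\epsilon)$) gives $\mathbf{H}_0(P_*)\simeq A$. With this correction your construction coincides with the one the paper recalls, and the deferred coherence work in the comparison argument is precisely the content of \cite{mamuka_adams}.
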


For the reader convenient we recall how to construct projective
resolutions. Take an object $A\in \ta$ and choose a projective
object $P_0$ together with a cofaithful morphism $\epsilon: P_0\to
A_0$. Let $(K_0,i_0, \kappa_0)$ be the kernel of $\epsilon$, where
$i_0:K\to P_0$ is a morphism and $\kappa_0:\epsilon \circ i_0\then
0$ is a track. Choose a projective object $P_1$ together with a
cofaithful morphism $\epsilon_1: P_1\to K_0$. Now we set
$d_0=i_0\epsilon_1:P_1\to P_0$ and $\hat{\epsilon}
=\epsilon_1^*(\kappa_0)$ which is a track $\epsilon\circ d_0\then
0$. Let $(K_1,i_1,\kappa_1)$ be the relative kernel $\ker(d_0;
\hat{\epsilon})$. Choose a projective object $P_2$ together with a
cofaithful morphism $\epsilon_2:P_2\to K_1$ and set
$d_1=i_1\epsilon_2:P_2\to P_1$, $\partial_0=\epsilon_2^*(\kappa_1)$
and continue this way.

Let $\verb"P"$ be the full 2-subcategory of $\ta$ which consists of
projective objects. Then for any additive 2-functor $T:\verb"P"\to
\scgr$ one obtains a well-defined additive 2-functors ${\sf
L}_n(T):\ta\to \scgr$ (called the \emph{secondary left derived
2-functors}) by
$${\bf L}_n(T) (A):={\bf H}_n(T(P_*))$$
where $P_*$ is a projective resolution of $A$. If one takes the
Takeuchi-Ulbrich homology instead and get the Takeuchi-Ulbrich left
derived functors, which are denote by $L_nT$, $n\geq 0$. So by the
very definition one has $\pi_0({\bf L}_n(T) )=L_nT$. It follows then
that $\pi_1 ({\bf L}_n(T) )=L_{n+1}T$ (see Remark 3.2 in
\cite{2-ch}).

In the following proposition we summarize the basic properties  of
derived functors.
\begin{Pro}\label{L-tv} i) The secondary derived functors form a system of $\d$-functors. More precisely,
if
$$
\xymatrix{ A\ar[r]^{f}\rruppertwocell<12>^{0}{^\hskip-2ex\alpha}
&B\ar[r]^{g} &C }
$$
is an extension in $\ta$ (see Definition 2.2 in \cite{ext-b-v}),
then the sequence of symmetric categorical groups
$$\cdots \to {\bf L}_{n+1}T(C)\to {\bf L}_{n}T(A)\to {\bf L}_{n}T(B)\to
{\bf L}_{n}T(C)\to \cdots$$ is 2-exact. Furthermore we have the
following exact sequence of abelian groups
$$\cdots \to {L}_{n+1}T(C)\to {L}_{n}T(A)\to {L}_{n}T(B)\to
{L}_{n}T(C)\to \cdots$$

ii) If $A$ is any object of $\ta$, then $${\bf L}_{n}T(A)=\begin{cases} 0, & n<-1 \\
\Sigma {\bf L_0}T(A),&n=-1\end{cases}$$ Hence $L_nT=0$ if $n<0$.

iii) If $P$ is projective, then
$${\bf L}_{n}T(P)\cong \begin{cases}  T(P), &
n=0\\ \Omega T(P)& n=1\\ 0, & n\not =-1, 0,1\end{cases}$$
Furthermore $${ L}_{n}T(P)\cong \begin{cases} \pi_i(TP) &n=0,1\\ 0&
n\not = 0,1\end{cases}$$

 iv)
Assume  $T_n:\ta\to \scgr$, $n\in \Z$  is a system of $\d$-functors
such that $T_n=0$, if $n<-1$. If for any projective $P$ one has
$T_n(P)=0$ for $n>1$ and $\pi_1 T_1(P)=0$, then there exist a
natural equivalence of 2-functors
$${\bf L}_nT_0\cong T_n, \ n\in \Z$$
\end{Pro}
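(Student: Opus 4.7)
The plan is to follow the classical proofs of the corresponding facts about derived functors in abelian categories, while paying careful attention to the places where the 2-categorical structure forces genuinely new phenomena (the suspension in (ii) and the loops in (iii)). All four parts reduce to manipulations with secondary homology, combined with the long 2-exact sequence for an extension of 2-chain complexes established in Section \ref{2-complex}.

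For part (i), the main input is a 2-dimensional horseshoe lemma. Given an extension $A\to B\to C$ and projective resolutions $P^A_*\to A$, $P^C_*\to C$, I would construct a projective resolution of $B$ degree by degree by setting $P^B_n=P^A_n\oplus P^C_n$, lifting $P^C_n\to C$ through $B\to C$ using projectivity of the $P^C_n$, and assembling the boundary maps together with coherent tracks $\partial_n$ so that the resulting short sequence of 2-chain complexes is an extension in $\mathbf{2Chain}(\ta)$. Since $T$ is an additive 2-functor it preserves extensions, so we obtain an extension of 2-chain complexes in $\scgr$; the long 2-exact sequence of secondary homology from \cite{2-ch} then yields the stated 2-exact sequence of ${\bf L}_nT$. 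Passing to $\pi_0$ with the help of Lemma \ref{2z} produces the ordinary long exact sequence of Takeuchi-Ulbrich derived functors.

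For parts (ii) and (iii), I compute the secondary homology of a projective resolution $P_*$ directly. Since $P_n=0$ for $n<0$ we have $T(P_n)=0$ for $n<0$; for $n\le -2$ both $Z_n$ and $A_{n+2}$ in the defining diagram of ${\bf H}_n$ vanish, so ${\bf L}_nT(A)\simeq 0$. For $n=-1$ the target of $d_{-1}$ is zero, $Z_{-1}$ reduces to $T(P_0)$, and ${\bf L}_{-1}T(A)$ becomes the 2-categorical cokernel of $d_0$; the cokernel/kernel long 2-exact sequence in $\scgr$ together with Lemma \ref{2z} identifies this cokernel with $\Sigma{\bf L}_0T(A)$. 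For (iii), the trivial resolution $P_*=P$ concentrated in degree $0$ gives $T(P_*)$ also concentrated in degree $0$; a direct computation of the relative kernels and relative cokernels of such a complex yields the values $T(P)$, $\Omega T(P)$, $\Sigma T(P)$ in degrees $0,1,-1$ respectively and zero elsewhere. The Takeuchi-Ulbrich statement follows by applying $\pi_0$ and using $\pi_0\Omega X=\pi_1 X$ and $\pi_0\Sigma X=0$ in $\scgr$.

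For part (iv), I would use dimension shifting. Since $\{T_n\}$ and $\{{\bf L}_nT_0\}$ are $\delta$-functors agreeing in degree $0$, the universal property of derived functors furnishes a comparison transformation $\alpha_n:{\bf L}_nT_0\to T_n$ with $\alpha_0$ an equivalence. Given $A\in\ta$, choose a cofaithful map $P\to A$ with $P$ projective, complete it to an extension $K\to P\to A$ and apply the long 2-exact sequences of both $\delta$-functors. The hypotheses $T_n(P)=0$ for $n>1$ and $\pi_1T_1(P)=0$, combined with the analogous vanishing for ${\bf L}_nT_0$ supplied by (iii), force ${\bf L}_nT_0(A)\simeq{\bf L}_{n-1}T_0(K)$ and $T_n(A)\simeq T_{n-1}(K)$ for $n\ge 2$; induction on $n$ propagates the equivalence through high degrees, and the low degrees $n=-1,0,1$ are handled directly from the low-degree portion of both sequences. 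The main obstacle is the 2-dimensional horseshoe lemma in step (i): assembling all the coherence tracks $\partial_n$ so that the resulting sequence is honestly an extension in $\mathbf{2Chain}(\ta)$ rather than an extension only at each fixed degree requires a genuinely new layer of bookkeeping beyond the classical horseshoe lemma, although the iterative projective resolution construction recalled in the paragraph preceding the statement provides the inductive template needed to propagate tracks through all degrees.
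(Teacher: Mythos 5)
Your treatment of (ii) and (iii) by direct computation of the secondary homology of a resolution is fine (indeed more explicit than the paper, which derives (ii) from the long exact sequence of (i) via Lemma \ref{2z} and declares (iii) clear). The real divergence, and the real problem, is in (i): you route everything through a 2-dimensional horseshoe lemma, and you yourself identify the assembly of the coherence tracks on $P^A_n\oplus P^C_n$ as ``the main obstacle'' --- but you never discharge it, so the crux of (i) is left unproved. The paper avoids the horseshoe altogether: lift $f:A\to B$ to a 2-chain map $f_*:P_*\to P'_*$ between projective resolutions (possible by the 2-equivalence ${\bf PR}\to\ta$ recorded just before the statement), and observe that the mapping cone of $f_*$ --- whose differential \emph{and} track were already written down explicitly in Section \ref{2-complex} --- is a projective resolution of $C$. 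The cone sits in a degreewise split extension of 2-chain complexes, which the additive 2-functor $T$ preserves, and the long 2-exact sequence of secondary homology of \cite{2-ch} then gives (i) with no new coherence bookkeeping; the $\pi_0$-statement follows as in the paper. So either supply a genuine proof of your 2-horseshoe (construct the differentials and the tracks $\partial_n$, verify the augmentation to $B$ is a weak equivalence, and verify the termwise sequence is an extension in ${\bf 2Chain}(\ta)$), or switch to the cone argument, which is the missing idea here.

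In (iv) there is a second gap: you invoke ``the universal property of derived functors'' to produce comparison maps $\alpha_n:{\bf L}_nT_0\to T_n$, but no such universal property has been established for secondary derived 2-functors, and part (iv) is precisely the uniqueness statement one would want to extract from it, so this is circular as stated. The paper argues by bare dimension shifting: $T_{-1}\simeq\Sigma T_0$ by the argument of (ii); from an extension $B\to P\to A$ with $P$ projective one gets $T_{n+1}(A)\simeq T_n(B)$ for $n\ge 2$; and the delicate degree $n=1$ is settled by showing that the canonical morphism $T_1(A)\to\ker\bigl(T_0(B)\to T_0(P)\bigr)$ is an equivalence, where 2-exactness reduces this to injectivity of $\pi_1T_1(A)\to\pi_1T_0(B)$, which is exactly where the hypothesis $\pi_1T_1(P)=0$ (discreteness of $T_1(P)$) enters, via Lemma \ref{2z}. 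Your remark that ``the low degrees are handled directly'' hides precisely this step. Since ${\bf L}_nT_0$ satisfies the same vanishing by (ii) and (iii), both systems are computed by the same recursion in terms of $T_0$, and that recursion --- not an a priori comparison transformation --- is what yields the natural equivalence ${\bf L}_nT_0\simeq T_n$.
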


\begin{proof} i) Easily follows from the fact that if $f_*:P_*\to
P_*'$ is a morphism of projective resolutions, which induces $f$ in
the zeroth homology, then the mapping cone of $f$ is a projective
resolution of $C$. ii) The fact ${\bf L}_n=0$ for $n\leq -2$ is
obvious. Applying Lemma \ref{2z} to the 2-exact sequence from i) we
first get $\pi_0({\bf L}_{-1}(C))=\pi_1({\bf L}_{-2}(C))=0$ and then
$\pi_0({\bf L}_0(C))=\pi_1({\bf L}_{-1}(C)).$ The statement iii) is
clear. To show iv) observe that the argument in ii) shows that
$T_{-1}(A)\cong \Sigma T_0(A)$. Next, we choose an extension
$$
\xymatrix{ B\ar[r]^{f}\rruppertwocell<12>^{0}{^\hskip-2ex\alpha}
&P\ar[r]^{g} &A }
$$
with projective $P$. If one applies the long exact sequence we get
equivalence $$T_{n+1}A\cong T_nB, \ \ n \geq 2$$ and the following
2-exact sequence
$$0\to T_2(A)\to T_1(B)\to T_1(P)\to T_1(A)\to T_0(B)\xto{f_*} T_0(P)$$
We claim that the canonical morphism $T_1A\to ker(f_*)$ is an
equivalence. By 2-exactness it suffice to show that the induced map
$\pi_1T_1(A)\to \pi_1T_0(B)$ is a monomorphism, but this follows
from the fact that $T_1P$ is discrete. Hence all functors $T_n$ can
be express in terms of $T_0$ and we are done.

\end{proof}

Observe that theory of left derived functors have obvious analogue
for right derived 2-functors, both for covariant and contravariant
case. Of course in covariant case we have to use injective
resolutions instead of projective ones.

\begin{Pro}\label{R-tv} i)
The secondary right derived functors of a 2-functor $T:\ta\to \scgr$
form a system of $\delta$-functors. More precisely, if
$$
\xymatrix{ A\ar[r]^{f}\rruppertwocell<12>^{0}{^\hskip-2ex\alpha}
&B\ar[r]^{g} &C }
$$
is an extension in $\ta$ , then the sequence of symmetric
categorical groups
$$\cdots \to {\bf R}^{n}T(A)\to {\bf R}^{n}T(B)\to {\bf R}^{n}T(C)\to
{\bf R}^{n+1}T(A)\to \cdots$$ is 2-exact. Moreover for any object
$A$ of $\ta$, one has
$${\bf R}^{n}T(A)=\begin{cases} 0, & n<-1 \\
\Omega {\bf R^0}T(A),&n=-1\end{cases}$$
 Furthermore, if $I$ is injective, then
$${\bf R}^{n}T(I)\cong \begin{cases}  T(I), &
n=0\\ \Sigma T(I)& n=1\\ 0, & n\not =-1, 0,1\end{cases}$$

ii) Assume  $T^n:\ta\to \scgr$, $n\in \Z$  is a system of
$\delta$-functors such that $T^n=0$, if $n<-1$. If for any injective
$P$ one has $T^n(I)=0$ for $n>1$ and $\pi_0 T^1(I)=0$, then there
exist a natural equivalence of 2-functors
$${\bf R}^nT^0\cong T^n, \ n\in \Z$$
\end{Pro}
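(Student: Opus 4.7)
The plan is to dualize the proof of Proposition \ref{L-tv} throughout, replacing projective resolutions by injective coresolutions, homological by cohomological indexing, and interchanging the roles of $\Omega$ and $\Sigma$ at the extremal degrees. The construction of injective coresolutions is dual to the one sketched after Proposition \ref{L-tv}: one successively chooses faithful morphisms into injective objects and takes relative cokernels, with the enough-injectives hypothesis built into the existence of ${\bf R}^n T$.

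For part (i), given an extension $A \to B \to C$, I would apply a dual horseshoe argument to produce a short exact sequence of injective coresolutions $I_A^\bullet \to I_B^\bullet \to I_C^\bullet$ (with middle term the pointwise biproduct $I_A^\bullet \oplus I_C^\bullet$), apply $T$ degreewise, and extract the long 2-exact sequence of secondary cohomology via the mapping cofibre construction dual to the mapping cone of Section \ref{2-complex}. Vanishing of ${\bf R}^n T$ for $n \leq -2$ is immediate since injective coresolutions are supported in nonnegative cohomological degrees. The identification ${\bf R}^{-1} T(A) \cong \Omega {\bf R}^0 T(A)$ is then extracted from the long 2-exact sequence exactly as in (ii) of \ref{L-tv}: Lemma \ref{2z} applied to the segment with ${\bf R}^{-2} T = 0$ forces $\pi_1 {\bf R}^{-1} T(A) = 0$, and a further application yields $\pi_0 {\bf R}^{-1} T(A) \cong \pi_1 {\bf R}^0 T(A)$, identifying ${\bf R}^{-1} T(A)$ with the discrete symmetric categorical group $\Omega {\bf R}^0 T(A)$. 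For injective $I$, the trivial coresolution ($I$ alone in cohomological degree $0$) makes ${\bf R}^n T(I)$ the secondary cohomology of the 2-chain complex with $T(I)$ in degree $0$; a direct computation from the definition of $Z_n$ and the relative cokernel gives the values $T(I), \Sigma T(I), \Omega T(I), 0$ in degrees $0, 1, -1$, and elsewhere respectively.

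For part (ii), I would proceed inductively on $n$. The case $T^{-1}(A) \cong \Omega T^0(A)$ follows from part (i) applied to the given system $\{T^n\}$. For $n \geq 1$, choose for each $A$ a faithful morphism $A \to I$ with $I$ injective and let $B$ be its relative cokernel, producing an extension $A \to I \to B$. The long 2-exact sequence of $\{T^n\}$ for this extension and the vanishing $T^n(I) = 0$ for $n \geq 2$ yield, via Lemma \ref{2z}, dimension-shifting equivalences $T^{n+1}(A) \cong T^n(B)$ for $n \geq 2$, reducing the computation to the critical case $n = 1$. Here the segment $T^0(I) \to T^0(B) \to T^1(A) \to T^1(I)$, together with the codiscreteness hypothesis $\pi_0 T^1(I) = 0$, gives via Lemma \ref{2z} an equivalence $T^1(A) \cong \cok(T^0(I) \to T^0(B))$, expressing $T^1$ purely in terms of $T^0$. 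Since ${\bf R}^\bullet T^0$ satisfies all the same vanishing and codiscreteness properties by part (i) (in particular $\pi_0 {\bf R}^1 T^0(I) = \pi_0 \Sigma T^0(I) = 0$), both systems are governed by the same recipe in terms of $T^0$ and the chosen extensions, yielding the desired equivalences ${\bf R}^n T^0 \cong T^n$.

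The step I expect to be most delicate is assembling the objectwise equivalences, produced from specific choices of extensions $A \to I \to B$, into a coherent pseudo-natural equivalence of 2-functors compatible with the $\delta$-functor structure. In the 1-categorical setting the analogous argument is essentially automatic, but in the 2-categorical setting one must explicitly track the coherence 2-cells ensuring compatibility with the tracks entering the $\delta$ structure. The argument mirrors but refines the last sentence of the proof of (iv) of \ref{L-tv}.
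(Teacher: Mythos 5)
Your proposal is, in substance, exactly what the paper intends: the paper gives no separate proof of Proposition \ref{R-tv}, presenting it as the formal dual of Proposition \ref{L-tv}, and you dualize that proof correctly at the delicate points. In particular, in (ii) your identification $T^1(A)\cong\cok\bigl(T^0(I)\to T^0(B)\bigr)$ via the cokernel clause of Lemma \ref{2z} and the hypothesis $\pi_0T^1(I)=0$ is the precise dual of the paper's step $T_1(A)\cong\ker\bigl(T_0(B)\to T_0(P)\bigr)$, which uses $\pi_1T_1(P)=0$; likewise the edge computation ${\bf R}^{-1}T\cong\Omega{\bf R}^0T$ and the values $T(I),\Sigma T(I),\Omega T(I)$ on injectives are the right transpositions of $\Sigma$/$\Omega$. (One small wording slip: the case $T^{-1}\cong\Omega T^0$ follows from \emph{the argument of} part (i) applied to the abstract $\delta$-system, not from part (i) itself, which concerns ${\bf R}^nT$; this is how the paper phrases the corresponding step in \ref{L-tv}(iv).) The one genuine divergence is in (i): you build the long 2-exact sequence from a dual horseshoe lemma producing a degreewise split extension of coresolutions, whereas the paper's mechanism (in the proof of \ref{L-tv}(i), to be dualized) is to lift a morphism of the extension to a morphism of (co)resolutions and use that its mapping cone is again a (co)resolution of the third term, so the needed extension of 2-chain complexes comes directly from the cone construction of Section \ref{2-complex} together with \cite{2-ch}. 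A 2-categorical horseshoe lemma is nowhere established in the paper, and constructing the differentials and coherence tracks on the middle coresolution is a real, if routine, piece of work your route would have to supply; the cone route avoids it at no cost.

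There is also a small gap in your induction in (ii): the dimension shifts $T^{n+1}(A)\cong T^n(B)$ hold only for $n\ge 2$, hence start at $T^3$, while your critical case treats $T^1$; degree $2$ is covered by neither, and $T^2(A)$ is \emph{not} equivalent to $T^1(B)$ in general, since $T^1(I)$ is only connected rather than zero. The paper's proof of \ref{L-tv}(iv) handles this explicitly via the six-term sequence $0\to T_2(A)\to T_1(B)\to T_1(P)\to T_1(A)\to T_0(B)\to T_0(P)$; dually you need the identification $T^2(A)\cong\cok\bigl(T^1(I)\to T^1(B)\bigr)$, which follows from the same cokernel clause of Lemma \ref{2z} because $\pi_0T^2(I)=0$ holds trivially ($T^2(I)=0$). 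With that clause inserted, every $T^n$ is expressed in terms of $T^0$, the same formulas hold for ${\bf R}^\bullet T^0$ by part (i), and your concluding comparison (including the naturality/coherence issue you rightly flag, which the paper also leaves implicit) goes through at the same level of rigour as the paper's own argument.
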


\section{Applications to ${\bf Ext}$}

  In particular one can take
the 2-functors ${\bf Hom}(A,-)$ or  ${\bf Hom}(-,B)$ and get the
secondary derived 2-functors. As in the classical case these two
approach gives equivalent objects. Moreover we will show that in
dimension 1 we recover the ${\bf Ext}^1$ from \cite{ext-b-v}.

To start with we let ${\bf Ext}^n_\ta(A,-)$, $n\in \Z$ be the
secondary right derived functors of the 2-functors ${\bf
Hom}_\ta(A,-)$.   We use ${Ext}^n_\ta(A,-)$ for Takeuchi-Ulbrich
derived functors. Of course we are assuming that $\ta$ has enough
injective objects. Then by the dual of Proposition \ref{L-tv} we
have:

\begin{Pro} i) There are natural equivalences
$${\bf Ext}_\ta^n(A,B)\cong \begin{cases} 0 &n\leq -2\\
\hom_{\Ho(\ta)}(A,\Omega B)=\hom_{\Ho(\ta)}(\Sigma A,B)& n=-1\\ {\bf
Hom}_\ta(A,B), & n=0\end{cases}$$ In dimension $-1$ it is understood
that the abelian group $\hom_{\Ho(\ta)}(A,\Omega B)$ is considered
as a discrete symmetric categorical group. Thus
$$ {Ext}_\ta^n(A,B)\cong \begin{cases} 0 &n\leq -2\\
\hom_{\Ho(\ta)}(A,\Omega B)=\hom_{\Ho(\ta)}(\Sigma A,B)& n=-1\\
{\hom}_{\Ho(\ta)}(A,B), & n=0\end{cases}$$

ii) If $I$ is an injective object in $\ta$, then ${\bf
Ext}_\ta^n(A,I)=0$ for $n>1$ and ${\bf Ext}_\ta^1(A,I)$ is a
connected symmetric categorical group, with $$\pi_1({\bf
Ext}_\ta^1(A,I))\cong \hom_{\Ho(\ta)}(A,B)$$ Thus
$${Ext}_\ta^1(A,I)=0$$

iii) Let
$$
\xymatrix{ B\ar[r]^{f}\rruppertwocell<12>^{0}{^\hskip-2ex\alpha}
&C\ar[r]^{g} &D }
$$
be an extension in $\ta$. Then the sequence
$$\cdots \to {\bf Ext}_\ta^n(A,B)\to {\bf Ext}_\ta^n(A,C)\to {\bf Ext}_\ta^n(A,D)\to
{\bf Ext}_\ta^{n+1}(A,B)\to \cdots$$ is a 2-exact sequence of
symmetric categorical groups. Moreover the sequence
$$\cdots \to {Ext}_\ta^n(A,B)\to {Ext}_\ta^n(A,C)\to {Ext}_\ta^n(A,D)\to
{Ext}_\ta^{n+1}(A,B)\to \cdots$$ is an exact sequence of abelian
groups.

\end{Pro}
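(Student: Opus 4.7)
The plan is to deduce everything from Proposition \ref{R-tv} applied to the 2-functor $T={\bf Hom}_\ta(A,-):\ta\to\scgr$, together with a small dictionary translating the $\Omega$ and $\Sigma$ operators into Hom-groupoids. Nothing new needs to be computed at the chain-complex level; the content is in identifying the small-degree pieces that Proposition \ref{R-tv} produces.

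For part (i), Proposition \ref{R-tv} gives ${\bf R}^nT=0$ for $n\le -2$ and ${\bf R}^0T(B)={\bf Hom}_\ta(A,B)$, so the only case requiring work is $n=-1$, where the proposition provides ${\bf Ext}^{-1}_\ta(A,B)\cong\Omega{\bf Hom}_\ta(A,B)$. I would then recall that for any symmetric categorical group $G$ the object $\Omega G$ is the discrete symmetric categorical group on the abelian group $\pi_1(G)$. It remains to identify $\pi_1{\bf Hom}_\ta(A,B)$. The definition of the hom-groupoid shows $\pi_1{\bf Hom}_\ta(A,B)$ is the abelian group of tracks $0\Rightarrow 0:A\to B$, and the two defining (adjunction-like) universal properties of $\Sigma$ and $\Omega$ in an abelian 2-category (as recorded e.g.\ in \cite{dupont}) give natural isomorphisms
\[
\pi_1{\bf Hom}_\ta(A,B)\cong\hom_{\Ho(\ta)}(\Sigma A,B)\cong\hom_{\Ho(\ta)}(A,\Omega B).
\]
The Takeuchi--Ulbrich version is then obtained by applying $\pi_0$ and noting that discrete symmetric categorical groups are their own $\pi_0$.

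For part (ii), Proposition \ref{R-tv} again does the essential work: since $I$ is injective, ${\bf R}^nT(I)=0$ for $n\ne -1,0,1$ and ${\bf R}^1T(I)\cong\Sigma\,{\bf Hom}_\ta(A,I)$. Dually to the $\Omega$ identification above, $\Sigma G$ is connected with $\pi_1(\Sigma G)\cong\pi_0(G)$ for any symmetric categorical group $G$, which immediately gives $\pi_1({\bf Ext}^1_\ta(A,I))\cong\pi_0{\bf Hom}_\ta(A,I)=\hom_{\Ho(\ta)}(A,I)$ (correcting what appears to be a typo in the stated equality with $\hom_{\Ho(\ta)}(A,B)$). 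Because $\Sigma$ of a symmetric categorical group is always connected, ${Ext}^1_\ta(A,I)=\pi_0\Sigma{\bf Hom}_\ta(A,I)=0$.

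Part (iii) is an immediate application of Proposition \ref{R-tv}(i) to the fixed 2-functor ${\bf Hom}_\ta(A,-)$: an extension in $\ta$ yields a long 2-exact sequence of secondary right derived symmetric categorical groups, and passing to $\pi_0$ produces the ordinary long exact sequence, using the standard fact that a 2-exact sequence of symmetric categorical groups yields a long exact sequence of $\pi_0$'s via the snake-type argument encoded in Lemma \ref{2z}. The only nontrivial step in the whole proof is the $\Omega$/$\Sigma$--Hom dictionary needed in (i) and (ii); once that is in hand, the rest is formal.
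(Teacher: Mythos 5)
Your proposal takes essentially the same route as the paper: the paper obtains this proposition precisely ``by the dual of Proposition \ref{L-tv}'' (i.e.\ Proposition \ref{R-tv}) applied to the 2-functor ${\bf Hom}_\ta(A,-)$, and your $\Omega/\Sigma$--Hom dictionary, the identification $\pi_1{\bf Hom}_\ta(A,B)\cong\hom_{\Ho(\ta)}(\Sigma A,B)\cong\hom_{\Ho(\ta)}(A,\Omega B)$, and the correction of the typo $\hom_{\Ho(\ta)}(A,B)$ to $\hom_{\Ho(\ta)}(A,I)$ in (ii) merely make explicit what the paper leaves implicit. The only small caveat is that the $n=0$ identification ${\bf R}^0\,{\bf Hom}_\ta(A,-)\cong{\bf Hom}_\ta(A,-)$ is not literally part of Proposition \ref{R-tv} but rests on left exactness of the hom 2-functor, a fact the paper likewise uses silently.
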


Assuming now  $\ta$ has enough projective objects as well. Then we
have

\begin{Pro} i) If $P$ is an projective object in $\ta$, then ${\bf
Ext}_\ta^n(P,B)=0$ for $n>1$ and ${\bf Ext}_\ta^1(P,B)$ is a
connected symmetric categorical group, with $$\pi_1({\bf
Ext}_\ta^1(P,B))\cong \hom_{\Ho(\ta)}(P,B)$$ Thus ${
Ext}_\ta^n(P,B)=0$ for $n>0$.

ii) Let
$$
\xymatrix{ A\ar[r]^{f}\rruppertwocell<12>^{0}{^\hskip-2ex\alpha}
&B\ar[r]^{g} &C }
$$
be an extension in $\ta$. Then the sequence
$$\cdots \to {\bf Ext}_\ta^n(C,D)\to {\bf Ext}_\ta^n(B,D)\to {\bf Ext}_\ta^n(A,D)\to
{\bf Ext}_\ta^{n+1}(C,D)\to \cdots$$ is a 2-exact of symmetric
categorical groups. Moreover the sequence
$$\cdots \to {Ext}_\ta^n(C,D)\to {Ext}_\ta^n(B,D)\to {Ext}_\ta^n(A,D)\to
{Ext}_\ta^{n+1}(C,D)\to \cdots$$ is an exact sequence of abelian
groups.

iii) The right derived functors of the 2-functor ${\bf
Hom}_\ta(-,B)$ are isomorphic to ${\bf Ext}_\ta^n(-,B)$.

iv) The categorical group ${\bf Ext}_\ta^1(A,B)$ is isomorphic to
the symmetric categorical group of extensions as it is defined in
\cite{ext-b-v}.
\end{Pro}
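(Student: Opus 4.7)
I would tackle the four parts in the order ii), i), iii), iv). Part ii) is a direct application of the machinery of section \ref{2-complex}: given the extension $A\to B\to C$ in $\ta$ and an injective resolution $D\to I^*$, applying the contravariant 2-functor ${\bf Hom}_\ta(-,I^n)$ for each $n$ yields an extension of 2-chain complexes in $\scgr$, since injectivity of each $I^n$ makes ${\bf Hom}_\ta(-,I^n)$ exact in the 2-categorical sense. The long 2-exact sequence in secondary homology associated to an extension of 2-chain complexes then produces the 2-exact sequence of Ext. Applying $\pi_0$ and invoking Lemma \ref{2z} delivers the ordinary long exact sequence of Takeuchi-Ulbrich Ext groups.

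For part i), the key observation is that ${\bf Hom}_\ta(P,-)$ is an exact 2-functor when $P$ is projective: the defining property of projectivity, that cofaithful morphisms are carried to analogous surjective morphisms in $\scgr$, forces preservation of 2-exact sequences. An exact 2-functor also preserves weak equivalences of 2-chain complexes, so given an injective resolution $B\to I^*$, the induced map from $T(B)={\bf Hom}_\ta(P,B)$ (viewed as a 2-chain complex concentrated in degree $0$) to $T(I^*)={\bf Hom}_\ta(P,I^*)$ is a weak equivalence. Hence ${\bf Ext}^n_\ta(P,B)={\bf H}^n(T(I^*))\cong{\bf H}^n(T(B))$, which by the same calculation underlying Proposition \ref{R-tv} iii) gives ${\bf Ext}^1_\ta(P,B)\cong\Sigma{\bf Hom}_\ta(P,B)$ --- a connected symmetric categorical group with $\pi_1\cong\hom_{\Ho(\ta)}(P,B)$ --- and ${\bf Ext}^n_\ta(P,B)=0$ for $n\geq 2$. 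Taking $\pi_0$ yields ${Ext}^n_\ta(P,B)=0$ for $n>0$.

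Part iii) is the balancing step. Choose a projective resolution $P_*\to A$ and an injective resolution $B\to I^*$ and form the double 2-chain complex $C^{p,q}={\bf Hom}_\ta(P_p,I^q)$ with total 2-chain complex $T^*$. There are augmentation comparison maps $T^*\to{\bf Hom}_\ta(A,I^*)$ and $T^*\to{\bf Hom}_\ta(P_*,B)$ obtained by collapsing in the $P$- and $I$-directions respectively. Exactness of ${\bf Hom}_\ta(-,I^q)$ for each injective $I^q$ shows that every row of the augmented bicomplex is 2-exact, so a standard double-complex argument identifies the first map as a weak equivalence; symmetrically, exactness of ${\bf Hom}_\ta(P_p,-)$ for each projective $P_p$ handles the second. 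Passing to secondary cohomology then identifies the two candidate computations of ${\bf Ext}^n_\ta(A,B)$.

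For part iv), pick a cofaithful morphism $P\to A$ with $P$ projective and its relative kernel $K\to P$, giving an extension $K\to P\to A$ in $\ta$. The long exact sequence of part ii) applied to this extension, combined with the explicit structure of ${\bf Ext}^*_\ta(P,B)$ known from i), presents ${\bf Ext}^1_\ta(A,B)$ as a relative cokernel of the connecting map ${\bf Hom}_\ta(P,B)\to{\bf Hom}_\ta(K,B)$. The construction in \cite{ext-b-v} of the symmetric categorical group of extensions admits an equivalent Yoneda-style presentation, and matching conventions then yields the stated equivalence on both $\pi_0$ (classical Yoneda Ext) and $\pi_1$ (equivalence classes of tracks). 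The main obstacle lies in part iii): making rigorous the total 2-chain complex of a 2-bicomplex requires spelling out the coherence data for horizontal versus vertical boundary tracks (signs, and associators for triple compositions of tracks), and the weak-equivalence criterion for the two collapse maps must be verified at the level of secondary homology rather than merely at $\pi_0$ --- a more delicate bookkeeping than the classical spectral-sequence-style collapse.
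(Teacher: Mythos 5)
Your handling of i), ii) and iv) is essentially the paper's own argument: i) rests on exactness of ${\bf Hom}_\ta(P,-)$ for projective $P$ (and the computation of the secondary cohomology of a complex concentrated in degree $0$, which matches the values in Proposition \ref{R-tv} for injectives); ii) applies ${\bf Hom}_\ta(-,I^*)$ to the extension to get an extension of 2-chain complexes and quotes the long 2-exact sequence of \cite{2-ch}; and iv) uses a projective presentation $K\to P\to A$ together with i), ii) and Lemma \ref{2z} to exhibit ${\bf Ext}^1_\ta(A,B)$ as the cokernel of ${\bf Hom}_\ta(P,B)\to{\bf Hom}_\ta(K,B)$. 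For the last identification the paper does not re-derive a ``Yoneda-style presentation'' of the Bourn--Vitale extension group: it cites Corollary 11.3 of \cite{ext-b-v}, which is precisely the statement you are gesturing at, so you should invoke it rather than leave the comparison as a matching-of-conventions remark.

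The genuine divergence, and the one genuine gap, is iii). The paper obtains iii) in one line from the axiomatic uniqueness of $\delta$-functors, i.e.\ the contravariant analogue of part iv) of Proposition \ref{L-tv} (compare ii) of Proposition \ref{R-tv}): by your ii) the family $\{{\bf Ext}^n_\ta(-,B)\}$ is a $\delta$-functor, and by your i) it satisfies ${\bf Ext}^n_\ta(P,B)=0$ for $n>1$ and $\pi_0{\bf Ext}^1_\ta(P,B)=0$ on projectives; these are exactly the hypotheses of that uniqueness statement, so the family coincides with the right derived 2-functors of ${\bf Hom}_\ta(-,B)$. Your balancing argument via a double 2-chain complex is a different and much heavier route, and as it stands it is incomplete: neither the paper nor \cite{2-ch} provides the total complex of a 2-bicomplex with its coherence tracks, nor the collapse lemma asserting that 2-exactness of the rows makes the augmentation map of total complexes a weak equivalence in the sense of secondary homology. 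That lemma is not a routine transcription of the classical acyclic-assembly argument, because in this setting a 2-exact row does not have vanishing secondary homology (it is only connected, and edge terms of the form $\Sigma$ and $\Omega$ appear, cf.\ ii), iii) of Proposition \ref{L-tv}), so the verification must indeed be carried out at the level of ${\bf H}_*$ and not just of Takeuchi--Ulbrich groups. You correctly identify this as the main obstacle, but it is an avoidable one: the $\delta$-functor argument uses only what you have already established in i) and ii).
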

\begin{proof}
 i) Since $P$ is projective, the 2-functor ${\bf Hom}_\ta(A,-)$
respects relative exact sequences and the result follows. ii) Take
an injective resolution $I^*$ of $D$, then ${\bf Hom}_\ta(C,I^*)\to
{\bf Hom}_\ta(B,I^*)\to {\bf Hom}_\ta(A,-)$ is an extension of
2-chain complexes and the result follows from \cite{2-ch}. iii) By
i) and ii) the result follows from iv) of Proposition \ref{L-tv} and
finally iv) follows from Corollary 11.3 in \cite{ext-b-v} and the
formula for first derived functor obtained in the proof of iv)
\ref{L-tv}.

\end{proof}

\section{The homotopy category of $\scgr$}
For symmetric categorical groups $\S_1$ and $\S_2$ we have a
groupoid (in fact a symmetric categorical group \cite{ext-b-v})
${\bf Hom}(\S_1,\S_2)$. It follows from the result of \cite{sinh}
that the 2-category $\scgr$ is 2-equivalent to the 2-category of
two-stage spectra (see also Proposition B.12 in \cite{HS}). Hence we
can use the classical facts of algebraic topology to study $\scgr$.
Let $\Ty$ be the category of triples $(A,B,a)$ where $A$ and $B$ are
abelian groups and $$a\in \hom(A/2A,B)=\hom(A,\ _2B)$$ where
$_2B=\{b\in B \mid 2b=0\}$. A morphism $(A,B,a)\to (A_1,B_1,a_1)$ is
a pair $(f,g)$ where $f:A\to A_1$ and $g:B\to B_1$ are
homomorphisms, such that $a_1f=ga$. The functor
$$\type:\Ho(\scgr)\to \Ty$$ is defined by
$$\type(\S):=(\pi_0(\S),\pi_1(\S),k_\S)$$
where $\S$ is a symmetric categorical group and $k_\S$ is the
homomorphism induced by the commutativity constrants in $\S$.
\begin{Pro} For any symmetric categorical groups $\S_1$ and
$\S_2$ one has a short exact sequence of abelian groups
\begin{equation}\label{sinh0}
0\to \ext(\pi_0(\S_1),\pi_1(\S_2))\to \pi_0({\bf Hom}(\S_1,\S_2))\to
\Ty(\type(\S_1),\type(\S_2))\to 0
\end{equation} Furthermore one has also an isomorphism of abelian
groups
\begin{equation}\label{sinh1}
\pi_1({\bf Hom}(\S_1,\S_2))\cong
\hom(\pi_0(\S_1),\pi_1(\S_1))\end{equation}
\end{Pro}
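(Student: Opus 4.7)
The plan is to invoke the 2-equivalence of $\scgr$ with 2-stage connective spectra (Sinh, cited just above the statement). Under it each symmetric categorical group $\S$ becomes a spectrum with $\pi_0=\pi_0(\S)$, $\pi_1=\pi_1(\S)$, and stable $k$-invariant in $H^2(H\pi_0(\S);\pi_1(\S))\cong \hom(\pi_0(\S)/2\pi_0(\S),\pi_1(\S))$ equal to $k_\S$; $\mhom(\S_1,\S_2)$ becomes the connective part of the mapping spectrum, so the task reduces to computing its $\pi_0$ and $\pi_1$.

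Apply $\mhom(\S_1,-)$ to the Postnikov fiber sequence $H\pi_1(\S_2)[1]\to\S_2\to H\pi_0(\S_2)$ to obtain a fiber sequence of mapping spectra, yielding a long exact sequence on $\pi_0,\pi_1$. Writing $P_i=\pi_0(\S_i)$, $Q_i=\pi_1(\S_i)$, the relevant terms are computed from the Postnikov sequence of $\S_1$ together with the identifications $\hml^0(P,A)=\hom(P,A)$, $\hml^1(P,A)=\ext(P,A)$, $\hml^2(P,A)=\hom(P/2P,A)$: one finds $\pi_0\mhom(\S_1,HP_2)=\hom(P_1,P_2)$, $\pi_1\mhom(\S_1,HP_2)=0$, $\pi_1\mhom(\S_1,HQ_2[1])=\hom(P_1,Q_2)$, and $\pi_0\mhom(\S_1,HQ_2[1])=H^1(\S_1;Q_2)$ sits in a short exact sequence $0\to \ext(P_1,Q_2)\to H^1(\S_1;Q_2)\to \ker(k_{\S_1}^*)\to 0$, where $k_{\S_1}^*:\hom(Q_1,Q_2)\to\hom(P_1/2P_1,Q_2)$ sends $g\mapsto g\circ k_{\S_1}$.

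Substituting yields $\pi_1\mhom(\S_1,\S_2)\cong\hom(P_1,Q_2)$, which is the second stated isomorphism with the evident typo $\pi_1(\S_1)\to\pi_1(\S_2)$ corrected. For $\pi_0$ the long exact sequence yields $0\to H^1(\S_1;Q_2)\to\pi_0\mhom(\S_1,\S_2)\to\ker\delta\to 0$, where $\delta:\hom(P_1,P_2)\to H^2(\S_1;Q_2)=\mathrm{coker}(k_{\S_1}^*)$ is the connecting map, to be identified as $\delta(f)=[k_{\S_2}\circ\bar f]$. Then $\ker\delta=\{f:k_{\S_2}\bar f=g\,k_{\S_1}\text{ for some }g\}$, and the projection $\Ty(\type\S_1,\type\S_2)\to\ker\delta$, $(f,g)\mapsto f$, has kernel $\ker(k_{\S_1}^*)$; splicing gives the desired $0\to\ext(P_1,Q_2)\to\pi_0\mhom(\S_1,\S_2)\to\Ty(\type\S_1,\type\S_2)\to 0$.

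The principal obstacle is verifying that the connecting homomorphism $\delta$ is indeed $f\mapsto[k_{\S_2}\bar f]$ and that the two extensions involving $\ker(k_{\S_1}^*)$ splice naturally to give $\Ty$ itself rather than some twisted variant. This is the standard obstruction-theoretic calculation for stable 2-stage Postnikov towers, but signs, the role of $2$-torsion in $\hml^2$, and the coherence between the boundary maps of the two Postnikov computations must be tracked carefully.
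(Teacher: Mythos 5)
Your argument is correct in outline, but it takes a genuinely different route from the paper: the paper gives no computation at all, obtaining (\ref{sinh0}) by citing Proposition 7.1.6 of \cite{htype} and dismissing (\ref{sinh1}) as obvious (where $\pi_1(\S_1)$ is indeed a typo for $\pi_1(\S_2)$, as you correct); what you sketch is in effect a direct proof of that cited proposition, using the same dictionary with two-stage spectra that the paper sets up via \cite{sinh} and then running the two Postnikov fibre sequences. Your intermediate values are right: $\pi_0\mhom(\S_1,HP_2)\cong\hom(P_1,P_2)$, $\pi_1\mhom(\S_1,HP_2)=0$, $\pi_1\mhom(\S_1,HQ_2[1])\cong\hom(P_1,Q_2)$ (the stable groups $[HP,\Sigma^nHA]$ being $\hom(P,A)$, $\ext(P,A)$, $\hom(P/2P,A)$ for $n=0,1,2$, though calling them $\hml$ is loose), and the extension $0\to\ext(P_1,Q_2)\to H^1(\S_1;Q_2)\to\ker(k_{\S_1}^*)\to0$; the identification of $\delta$ as composition with the $k$-invariant is immediate from the cofibre sequence $HQ_2[1]\to\S_2\to HP_2\xto{k_{\S_2}}HQ_2[2]$. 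Two caveats: $H^2(\S_1;Q_2)$ is not in general just $\cok(k_{\S_1}^*)$ (there is also a contribution from $\ext(Q_1,Q_2)$ coming from the cofibre sequence of $\S_1$), but this is harmless since $\delta(f)=p^*(k_{\S_2}\bar f)$ factors through the image of $\hom(P_1/2P_1,Q_2)$, and exactness still gives $\ker\delta=\set{f\mid k_{\S_2}\bar f=g\,k_{\S_1}\text{ for some }g}$ exactly as you use it; and the final splicing does require checking that the surjection $\pi_0\mhom(\S_1,\S_2)\to\ker\delta$ together with the $\ker(k_{\S_1}^*)$-coset structure is realized by the natural assignment $F\mapsto(\pi_0F,\pi_1F)$, i.e. by $\type$ itself rather than a twisted form of it. That last naturality check, which you flag but do not carry out, is precisely the content of the linear-extension statement in \cite{htype} (Proposition 7.1.6 with Lemma 7.2.4 and Theorem 7.2.7) that the paper invokes; so your route buys a self-contained argument, while the paper's citation buys exactly that remaining bookkeeping.
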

\begin{proof} The second isomorphism is obvious, while the first one
is Proposition 7.1.6 in \cite{htype}.
\end{proof}
We see that the both categories $\Ho$ and $\Ty$ are additive and the
functor
$$\type:\Ho\to \Ty$$
is additive. In fact it is a part of a linear extension of
categories (see Lemma 7.2.4 and Theorem 7.2.7 in \cite{htype}). It
follows from the properties of linear extensions of categories
\cite{ah} that the  functor $\type$ is full, reflects isomorphisms,
is essentially surjective on objects and it induces a bijection on
the isomorphism classes of objects. Moreover the kernel of $\type$
(morphisms which goes to zero) is a square zero ideal of $\Ho$.
Hence, for a given object $\A$ of the category $\Ty$ we can choose a
symmetric categorical group $K(\A)$ such that $\type(K(\A))=\A$.
Such object exist and is unique up to equivalence. Moreover, for any
morphism $f:\A\to \B$ we can choose a morphism of symmetric
categorical groups $K(f):K(\A)\to K(\B)$, such that $\type(K(f))=f$.
The reader must be aware that the assignments $\A\to H(\A)$,
$f\mapsto K(f)$ does NOT define a functor $\Ty\to \Ho$. Having in
mind relation with spectra, the construction $K$ for the objects of
the form $(A,0,0)$ coincides with Eilenberg-MacLane spectrum and in
general case is consistent with Definition 7.1.5 in \cite{htype}.

\section{Projective objects in $\scgr$}

In this section we prove the following theorem. Let us recall that
the symmetric categorical group $\Phi$ was defined in the
introduction.

\begin{The}\label{pi-sym} i) The symmetric categorical group $\Phi$ is a
small projective generator in $\scgr$. In particular $\scgr$ has
enough projective objects. Moreover, any projective object is
equivalent to a coproduct of copies of $\Phi$.

ii) For any 2-ring $R$ the right module $R$ is a small projective
generator of the abelian 2-category of 2-modules, in particular the
abelian 2-category of 2-modules has enough projective objects.
\end{The}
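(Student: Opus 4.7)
I would handle (ii) first, since it reduces directly to the 2-dimensional Yoneda lemma. For any 2-ring $R$ and any right $R$-module $M$, evaluation at $1\in R$ gives an equivalence of symmetric categorical groups $\mhom_R(R,M)\xrightarrow{\sim}M$, natural in $M$; thus the representable 2-functor $\mhom_R(R,-)$ is equivalent to the identity 2-functor on the 2-category of $R$-modules. Smallness and projectivity of $R$ then fall out automatically, since the identity preserves coproducts and sends any cofaithful morphism to itself. For the generator property I would construct a presentation of an arbitrary module $M$ in stages: choose a cofaithful $R^{(I)}\to M$ indexed over the objects of $M$, form its relative kernel, cover that kernel cofaithfully by some $R^{(J)}$, and recognize $M$ as the relative cokernel of the resulting diagram built from coproducts of $R$.

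\textbf{Part (i), freeness of $\Phi$.} The analogous statement for $\scgr$ hinges on the claim that $\Phi$ is the free symmetric categorical group on one generator, which I would formulate as a natural equivalence
\[
\mhom(\Phi,\S)\xrightarrow{\sim}\S,\qquad F\mapsto F(1).
\]
Essential surjectivity is established by constructing, for each object $x\in\S$, a symmetric monoidal functor $F_x:\Phi\to\S$ with $F_x(n)=n\cdot x$ on objects and $F_x(\epsilon_n)=n\cdot\epsilon_x$ on the generating automorphisms, where $\epsilon_x:x+x\to x+x$ is the self-braiding. The essential calculation is that the commutativity constraint $c_{n,m}=nm\cdot\epsilon_{n+m}$ in $\Phi$ must be carried to the braiding $nx+mx\to mx+nx$ in $\S$, which is precisely the standard coherence identity expressing the braiding between iterated sums of a single object as a power of the self-braiding (using that $2\epsilon_x=0$ since the symmetry is an involution). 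Fully faithfulness of evaluation-at-$1$ is then a routine consequence of the additivity of $\mhom$. Given this equivalence, smallness, projectivity and the generator property of $\Phi$ follow verbatim as in part (ii).

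\textbf{Classification of projectives.} For the last clause of (i), let $P$ be projective. The generator property supplies a cofaithful morphism $p:\Phi^{(I)}\to P$, and projectivity lifts $\id_P$ along $p$ up to track, exhibiting $P$ as a retract of the free object $\Phi^{(I)}$. Applying the type functor $\type:\Ho(\scgr)\to\Ty$ from the previous section, $\type(P)$ becomes a retract of $\type(\Phi^{(I)})$, which has $\pi_0=\Z^{(I)}$, $\pi_1=(\Z/2)^{(I)}$ and the canonical $k$-invariant identifying the two after reduction mod $2$. A direct bookkeeping argument with bases shows that any such retract has the form $\type(\Phi^{(J)})$ for some subset $J$. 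Since the relevant linear extension of categories reflects isomorphisms, $P\simeq\Phi^{(J)}$.

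\textbf{Main obstacle.} The central technical work is establishing $\mhom(\Phi,\S)\simeq\S$, where one must verify that the ad-hoc-looking commutativity constraint $nm\cdot\epsilon_{n+m}$ defining $\Phi$ is exactly what the symmetric monoidal structure on any receiver $\S$ forces. A secondary subtlety is promoting the abelian-group-level splitting $\pi_0(P)\hookrightarrow\Z^{(I)}$ to an actual splitting in $\scgr$; the obstruction lies in the square-zero kernel of $\type$ and vanishes once the types are aligned, so this step depends essentially on the linear extension structure recalled in the previous section.
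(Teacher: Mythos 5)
Your proposal is correct in substance, but for part (i) it reaches the theorem by a genuinely different route than the paper. Part (ii) you handle exactly as the paper does (the 2-dimensional Yoneda lemma). For part (i), your key lemma is the universal property of $\Phi$ as the free symmetric categorical group on one generator, i.e.\ that evaluation at $1$ gives an equivalence $\mhom(\Phi,\S)\simeq\S$, from which smallness, projectivity, the generator property and (via a retract argument) the classification of projectives follow. The paper never proves this freeness statement directly: it works through the functor $\type:\Ho(\scgr)\to\Ty$ and the linear-extension exact sequence (\ref{sinh0}), first classifying projectives in the algebraic category $\Ty$ via the adjunction $\Ty(l(M),\A)\cong\hom(M,A_0)$ of Lemma \ref{adj}, and then transporting lifts from $\Ty$ back to $\scgr$ because the $\ext$-term in (\ref{sinh0}) vanishes when $\pi_0$ of the source is free. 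Your route buys a stronger structural fact ($\mhom(\Phi,-)\simeq\Id$), which makes Proposition \ref{pi-r} immediate; its cost is exactly the coherence verification you flag as the main obstacle (that $F_x$ is symmetric monoidal, including on formal inverses), which the paper sidesteps by quoting the homotopy classification of stable $1$-types, a machinery it then reuses verbatim for injectives. Two small repairs to your sketch: $F_x(\epsilon_n)$ must be the automorphism of $nx$ corresponding to $n$ times the class of $c_{x,x}$ in $\pi_1(\S)$ (the self-braiding itself is an automorphism of $x+x$, not of $n$); and your presentation argument for the generator property needs three covering stages, since the paper's definition requires a diagram $A\to B\to C$ with a track $gf\Rightarrow 0$ in which all three objects are coproducts of the generator --- the plain cokernel of a two-stage cover $R^{(J)}\to R^{(I)}$ acquires spurious $\pi_1$ from the syzygies, which is precisely what the relative cokernel kills. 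Your retract bookkeeping in $\Ty$ does go through: a retract of $l(\Z^{(I)})$ is forced to be of the form $l(A)$ with $A$ a retract of $\Z^{(I)}$, hence free, and fullness together with reflection of isomorphisms for $\type$ then yields $P\simeq\Phi^{(J)}$; this recovers, by different means, the converse half of part iii) of the paper's Lemma \ref{adj}.
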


The statement on 2-modules is a direct consequence of Yoneda lemma
for 2-categories. The statement on $\scgr$ is a consequence of Lemma
\ref{adj} proved below.

Thanks to \cite{kv} a morphism $f$ in $\scgr$ is faithful (resp.
cofaithful) if underlying functor is faithful (resp. essentially
surjective). Recall also that \cite{kv} a morphism $F:\S_1\to \S_2$
in $\scgr$ is essentially surjective if it is epimorphism on
$\pi_0$, while a morphism $F:\S_1\to \S_2$ in $\scgr$ is faithful if
it is monomorphism on $\pi_1$. We can develop same sort of language
in the category $\Ty$. A morphism $f=(f_0,f_1)$ in $\Ty$ is
\emph{essentially surjective} if $f_0$ is epimorphism of abelian
groups. Moreover an object $\P$ in $\Ty$ is \emph{projective} of for
any essentially surjective morphism $f:\A\to \B$ in $\Ty$ the
induced map
$$\Ty(\P,\A)\to \Ty(\P,\B)$$ is surjective.

It is clear that a morphism $F:\S_1\to \S_2$ of symmetric
categorical groups is \emph{essentially surjective} iff $\type(F):
\type(\S_1)\to \type(\S_2)$ is so in $\Ty$. For an abelian group $M$
we introduce two objects in $\Ty$:
$$l(M):=(M,M/2M,id_{M/2M}),$$
$$M[0]=(M,0,0).$$

\begin{Le}\label{adj} i) If $M$ is an abelian group and
$\A=(A_0,A_1,\al)$ is an object in $\Ty$, then one has the following
functorial isomorphism of abelian groups
$$\Ty(l(M),\A)=\hom(M,A_0).$$

ii) An object $\P$ is projective in $\Ty$ iff it is isomorphic to
the object of the form $l(P)$ with free abelian group $P$.

iii) $\Phi$ is a projective object in $\scgr$ and any projective
object in $\scgr$ is equivalent to a  coproduct of $\Phi$.

iv) The 2-category of symmetric categorical groups have enough
projective objects.
\end{Le}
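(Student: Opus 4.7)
The plan is to prove (i)--(iv) in order, reducing the 2-categorical statements about $\scgr$ in (iii) and (iv) to the 1-dimensional statements (i)--(ii) via the functor $\type$ and its linear-extension properties recalled in the previous section. For (i), I simply unpack: a morphism $l(M)=(M,M/2M,\id)\to(A_0,A_1,\alpha)$ is a pair $(f_0,f_1)$ subject to $\alpha\bar f_0=f_1$ with $\bar f_0:M/2M\to A_0/2A_0$ induced by $f_0$, so $f_1$ is forced to equal $\alpha\bar f_0$ and the morphism is parametrised freely by $f_0\in\hom(M,A_0)$. The ``if'' direction of (ii) is then immediate: for $P$ free abelian and essentially surjective $f:\A\to\B$ (so $f_0:A_0\onto B_0$), a morphism $l(P)\to\B$ is by (i) a homomorphism $P\to B_0$, which lifts along $f_0$ by freeness.

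For the converse direction of (ii), which I regard as the main technical step, let $\P=(P_0,P_1,p)$ be projective in $\Ty$. The key observation is that $\pi:=(\id_{P_0},p)$ is a morphism $l(P_0)\to\P$ (its compatibility condition reduces to a tautology) and is essentially surjective since $\id_{P_0}$ is. Projectivity produces a section $\sigma=(\sigma_0,\sigma_1):\P\to l(P_0)$; computing $\pi\sigma=\id_\P$ forces $\sigma_0=\id_{P_0}$ and $p\sigma_1=\id_{P_1}$, while the compatibility condition for $\sigma$ itself forces $\sigma_1 p=\id_{P_0/2P_0}$. Hence $p$ is an isomorphism, yielding $\P\cong l(P_0)$. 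To see that $P_0$ is free, pick any surjection $F\onto P_0$ from a free abelian $F$; the induced morphism $l(F)\to l(P_0)\cong\P$ is essentially surjective, and projectivity yields a section, which by (i) is a homomorphism $P_0\to F$ splitting $F\onto P_0$, exhibiting $P_0$ as a direct summand of $F$.

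For (iii), first I show $\Phi$ is projective. Since $\pi_0(\Phi)=\Z$ is free, $\ext(\Z,\pi_1(\S))=0$ for every $\S$, and the exact sequence \eqref{sinh0} combined with (i) gives a natural identification $\hom_{\Ho(\scgr)}(\Phi,\S)\cong\Ty(l(\Z),\type(\S))\cong\pi_0(\S)$. By \cite{kv}, a cofaithful morphism in $\scgr$ is surjective on $\pi_0$, so it induces a surjection on $\hom_{\Ho(\scgr)}(\Phi,-)$; thus $\Phi$, and hence every coproduct $\Phi^{(I)}$, is projective. Conversely, suppose $P\in\scgr$ is projective; I claim $\type(P)$ is projective in $\Ty$. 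Given essentially surjective $f:\A\to\B$ in $\Ty$ and $g:\type(P)\to\B$, use the construction $K$ and fullness of $\type$ to lift $f$ and $g$ to morphisms $F:K(\A)\to K(\B)$ and $G:P\to K(\B)$ in $\scgr$; then $F$ is cofaithful, projectivity of $P$ lifts $G$ through $F$ to some $H:P\to K(\A)$, and $\type(H)$ lifts $g$ through $f$. By (ii), $\type(P)\cong l(P_0)$ for some free abelian $P_0\cong\Z^{(I)}$; since $\type$ preserves coproducts (homotopy groups and $k$-invariants being termwise additive), $\type(\Phi^{(I)})\cong l(\Z^{(I)})\cong\type(P)$, and since $\type$ induces a bijection on isomorphism classes of objects, $P\simeq\Phi^{(I)}$ in $\scgr$.

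Finally for (iv): given $S\in\scgr$, set $I:=\pi_0(S)$ and, via the identification $\hom_{\Ho(\scgr)}(\Phi,S)\cong\pi_0(S)$, pick for each $i\in I$ a morphism $\Phi\to S$ representing $i$. The resulting coproduct map $\Phi^{(I)}\to S$ is surjective on $\pi_0$ (its image contains every $i\in\pi_0(S)$), hence cofaithful, and $\Phi^{(I)}$ is projective by (iii); thus $\scgr$ has enough projective objects. The main obstacle throughout is the converse direction of (ii): one must spot the canonical essentially surjective morphism $l(P_0)\to\P$ and extract from its splitting that the structure map $p$ was an isomorphism all along. Once this is in place, the transfer to $\scgr$ in (iii) is almost automatic from the linear-extension machinery of $\type$ recalled in the previous section, and (iv) is a simple enumeration.
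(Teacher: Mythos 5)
Your proof is correct and follows essentially the same route as the paper: part (i) by direct unpacking, part (ii) via the canonical essentially surjective morphism $l(P_0)\to \P$ and its splitting, and parts (iii)--(iv) by transferring lifting problems along $\type$ using the vanishing of the $\ext$-term in (\ref{sinh0}) when $\pi_0$ is free. The only differences are cosmetic: you establish $\P\cong l(P_0)$ first and deduce freeness of $P_0$ by a retract argument where the paper proves freeness first (via the objects $A[0]$, $B[0]$), and in (iii)--(iv) you argue with $\Phi$ together with closure of projectives under coproducts rather than directly with $K(l(P))$ for an arbitrary free abelian group $P$.
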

\begin{proof} i) Assume $f=(f_0,f_1):l(M)\to \A$ is a morphism in
$\Ty$. So $f_0:M\to A_0$ and $f_1:M/2M\to A_1$ are homomorphisms of
abelian groups and the following diagram is commute
$$\xymatrix{M/2M\ar[d]^{\hat{f_0}}\ar[r]^\id&M/2M\ar[d]^{f_1}\\
A_0/2A_0\ar[r]_{\al}&A_1}$$ Here $\hat{f_0}$ is induced by $f_0$. It
follows that $f_1$ is completely determined by $f_0$. This proves
the result.

ii) Let $P$ be a free abelian group and let $\A\to\B$ be an
essentially surjective morphism in $\Ty$. Thus $A_0\to B_0$ is an
epimorphism. It follows that $\hom(P,A_0)\to\hom(P,B_0)$ is
epimorphism as well, hence by the virtue of i) the map
$\Ty(l(P),\A)\to \Ty(l(P),\B)$ is surjective. Thus $l(P)$ is
projective in $\Ty$. Conversely, assume $\P=(P_0,P_1,\pi)$ is a
projective object in $\Ty$. We claim that $P_0$ is a free abelian
group. In fact it suffice to show that it is a projective object in
the category $\ab$ of abelian groups. Take any epimorphism of
abelian groups $f_0:A\to B$ and any homomorphism of abelian groups
$g_0:P_0\to B$. We have to show that $g_0$ has a lift to $A$.
Observe that $f=(f_0,0):A[0]\to B[0]$ is essentially surjective in
$\Ty$ and $g=(g_0,0):\P\to B[0]$ is a well-defined morphism in
$\Ty$. By assumption we can lift $g$ to a morphism $\tilde{g}:\P\to
A[0]$, It is clear that $\tilde{g}=(\tilde{g_0},0)$ for some
$\tilde{g_0}:P_0\to A$. Clearly $g_0=f_0\circ \tilde{g_0}$. It
follows that $P_0$ is a free abelian group. Hence $l(P_0)$ is a
projective object in $\Ty$. By i) the identity map defines a
canonical morphism $i=(\id_{P_0},i_1):l(P_0)\to \P$, which obviously
is essentially surjective in $\Ty$. Since $\P$ is projective it
follows that there exist a morphism $p=(\id_{P_0},p_1):\P\to l(P)$
such that $i\circ p=\id_\P$. Thus we have a commutative diagram
$$\xymatrix{P_0/2P_0\ar[r]^{\pi}\ar[d]^\id &P_1\ar[d]^{p_1}\\
P_0/2P_0\ar[r]^{\id}\ar[d]^\id& P_0/2P_0\ar[d]^{i_1}\\
P_0/2P_0\ar[r]^{\pi}&P_1}$$ with $i_1p_1=\id_{P_1}$. It follows that
$p_1$ and $i_1$ are mutually inverse isomorphisms of abelian groups.
Hence $p:\P\to l(P)$ and $l:l(P)\to \P$ are mutually inverse
isomorphisms in $\Ty$.

iii) First of all observe that $\type(\Phi)=l(\Z)$. Hence our
assertion is equivalent to  the following one: For any free abelian
group $P$ the symmetric categorical group $K(l(P))$ is projective
symmetric categorical group and conversely, if $\S$ is a projective
symmetric categorical group then $\pi_0(\S)$ is a free abelian group
$\S$ is equivalent to $H(l(\pi_0(\S)))$. To prove it, let $F:\S_1\to
\S_2$ be an essentially surjective morphism of symmetric categorical
groups and $G:K(l(P))\to \S_2$ be a morphism of symmetric
categorical groups. Apply the functor $\type$ to get morphisms
$\type(F):\type(\S_1)\to \type(\S_2)$ and $\type(G):l(P)\to
\type(\S_2)$ in $\Ty$. Since $\pi_0(F):\pi_0(\S_1)\to\pi_0(\S_2)$ is
an epimorphism of abelian groups it follows that
$\type(F):\type(\S_1)\to \type(\S_2)$ is an essentially surjective
morphism in $\Ty$. Since $P$ is a free abelian group $l(P)$ is
projective in $\Ty$ by ii). Thus we can lift $\type(F)$ to get a
morphism $\hat{g}:l(P)\to \type(\S_1)$ such that $\type(F)\circ
\hat{g}=\type(G)$ holds in $\Ty(l(P),\type(\S_2))$. Since
$P=\pi_0(K(l(P))$ is free abelian group the Ext-term in the exact
sequence (\ref{sinh0}) disappears and we get the isomorphism
\begin{equation}\label{sinh2}\pi_0({\bf Hom}(K(l(P)),\S_i))\cong
\Ty(l(P),\type(\S_i)), \ \ i=0,1
\end{equation}
Take a morphism $L:K(l(P)\to \S_1$ of symmetric categorical groups
which corresponds to the morphism $\hat{g}:l(P)\to \type(\S_1)$. By
our construction one has an equality $\type(FL)=\type(G)$ in
$\Ty(l(P), \type(\S_2))=\pi_0({\bf Hom}(H(l(P)),\S_2))$. Thus the
classes of $FL$ and of $G$ in $\pi_0({\bf Hom}(K(l(P)),\S_1))$ are
the same. Hence there exist a track from $FL$ to $G$. This shows
that $K(l(P))$ is a projective symmetric categorical group.
Conversely assume $\S$ is a projective symmetric categorical group.
Since $\S$ and $K(\type(\S))$ are equivalent, it follows that
$K(\type(\S))$ is also projective. We claim that $\type(\S)$ is
projective in $\Ty$. In fact take any essentially surjective
morphism $f=(f_0,f_1): \A\to\B$ and any morphism $g:\type(\S)\to \B$
in $\Ty$. Then $K(f):K(\A)\to K(\B)$ is essentially surjective in
$\scgr$. Hence for $K(g):K(\type(\S))\to K(\B)$ we have a morphism
$\tilde{G}:K(\type(\S))\to K(\A)$ and a track $K(f)\circ
\tilde{G}\to K(g)$. Thus $K(f)\circ \tilde{G}= K(g)$ in $\pi_0({\bf
Hom}(K(\type(\S)),K(\B)))$. Now apply the functor $\type$ to get the
equality $f\circ \type(\tilde{G})=g$, showing that $\type(\S)$ is
projective in $\Ty$. Hence $\type(\S)$ is isomorphic to $l(P)$ for a
free abelian group $P$. Thus $\S$ and $K(l(P)$ are equivalent.

iv)  Let $\S$ be a symmetric categorical group. Choose a free
abelian group $P$ and an epimorphism of abelian groups $f_0:P\to
\pi_0(\S)$. By Lemma \ref{adj} it has a unique extension to a
morphism $f=(f_0,f_1):l(P)\to \type(\S)$ which is essentially
surjective. Since $P$ is a free abelian group, we have the
isomorphism (\ref{sinh2}), which show that there exist a morphism of
symmetric categorical groups $K(l(P)) \to \S$ which realizes $f_0$
on the level of $\pi_0$. Clearly this morphism does the job.
\end{proof}

\begin{Pro}\label{pi-r} The 2-category of symmetric categorical groups is
2-equivalent to the category of right categorical modules over the
categorical ring $\Phi$.
\end{Pro}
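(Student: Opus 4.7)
The plan is to deduce Proposition \ref{pi-r} from Theorem \ref{2-g-m} applied to $\ta = \scgr$ and $M = \Phi$, once the endomorphism 2-ring $\mathbf{End}_{\scgr}(\Phi)$ has been identified with $\Phi$ itself.

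First I would verify the hypotheses of Theorem \ref{2-g-m}. The fact that $\scgr$ is an abelian 2-category with arbitrary coproducts was already recorded after the definition of coproducts in the introductory section. Theorem \ref{pi-sym}(i) supplies exactly what is needed: $\Phi$ is a small projective generator of $\scgr$. Therefore Theorem \ref{2-g-m} yields a 2-equivalence between $\scgr$ and the 2-category of right 2-modules over the 2-ring $R = \mathbf{End}_{\scgr}(\Phi)$.

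The main content of the proof is then the identification of $\mathbf{End}_{\scgr}(\Phi)$ with $\Phi$ as a 2-ring. I would compute homotopy invariants first: since $\pi_0(\Phi) = \Z$ is free, the $\mathrm{Ext}$ term of the short exact sequence (\ref{sinh0}) vanishes, so the canonical map
\[
\pi_0(\mathbf{End}_{\scgr}(\Phi)) \to \Ty(\type(\Phi),\type(\Phi)) = \Ty(l(\Z),l(\Z)) \cong \hom(\Z,\Z) = \Z
\]
(using Lemma \ref{adj}(i) and the equality $\type(\Phi) = l(\Z)$) is an isomorphism. Moreover, (\ref{sinh1}) gives
\[
\pi_1(\mathbf{End}_{\scgr}(\Phi)) \cong \hom(\pi_0(\Phi),\pi_1(\Phi)) = \hom(\Z,\Z/2\Z) = \Z/2\Z.
\]
A direct inspection of the commutativity constraint on $\Phi$, induced by $n+m \to m+n$ being $nm\,\epsilon_{n+m}$, shows that composition of endomorphisms inherits the analogous $k$-invariant, namely the identity map $\Z/2\Z \to \Z/2\Z$. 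Hence $\type(\mathbf{End}_{\scgr}(\Phi)) = l(\Z) = \type(\Phi)$, and since any symmetric categorical group is determined up to equivalence by its type (as recalled from the properties of the linear extension $\type:\Ho(\scgr)\to\Ty$), we obtain an equivalence $\mathbf{End}_{\scgr}(\Phi) \simeq \Phi$ of symmetric categorical groups.

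The hardest step will be promoting this equivalence of symmetric categorical groups to an equivalence of 2-rings: one has to check that composition of endomorphisms of $\Phi$, transported along the equivalence above, matches the multiplication on $\Phi$ induced by integer multiplication (with trivial distributivity constraints). This is essentially a calculation with generators: the identity map corresponds to $1 \in \Z$; the $n$-fold sum $n\cdot \id_\Phi$ corresponds to the integer $n$; composition of such morphisms corresponds to multiplication of integers; and the unique non-trivial automorphism $\epsilon$ corresponds on the categorical side to the automorphism coming from the commutativity constraint, so the bilinear 2-cochain controlling the distributivity constraint on $\mathbf{End}_{\scgr}(\Phi)$ agrees with the trivial one on $\Phi$. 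Once this identification of 2-rings is in hand, Theorem \ref{2-g-m} delivers the 2-equivalence of $\scgr$ with the 2-category of right $\Phi$-modules, completing the proof.
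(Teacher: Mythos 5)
Your overall strategy is the same as the paper's: invoke Theorem \ref{2-g-m} with $\ta=\scgr$ and $M=\Phi$, using Theorem \ref{pi-sym}(i) to know that $\Phi$ is a small projective generator, and then identify the endomorphism $2$-ring ${\bf End}_{\scgr}(\Phi)={\bf Hom}(\Phi,\Phi)$ with $\Phi$. The gap is in how you carry out that identification. Computing $\pi_0$ and $\pi_1$ of ${\bf Hom}(\Phi,\Phi)$ from (\ref{sinh0}) and (\ref{sinh1}) and matching the invariant $\type$ only gives an \emph{abstract} equivalence of the underlying symmetric categorical groups, and a non-canonical one at that (recall the paper's explicit warning that $\A\mapsto K(\A)$, $f\mapsto K(f)$ is not a functor). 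What the proposition needs is an equivalence of $2$-rings, i.e.\ an actual functor equipped with coherent monoidal and distributivity data satisfying the Laplaza axioms; this is strictly more than agreement of the multiplication on generators and on homotopy groups, since ring structures on a fixed two-stage object carry a further invariant (the paper itself points to ${\bf HML}^3(\Z,\Z/2\Z)=\Z/2\Z$ in this very case). Your ``calculation with generators'' asserts the compatibility rather than producing the required coherent functor, so the step you yourself flag as the hardest is not actually carried out.

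The paper's proof avoids this difficulty by reversing the order of the argument: it first writes down the evident morphism of $2$-rings $\Phi\to{\bf Hom}(\Phi,\Phi)$, sending the object $n$ to the endomorphism of $\Phi$ given on objects by $x\mapsto nx$; being induced by the module structure, this morphism comes with its ring-map coherence by construction. It then only remains to check that this particular morphism is an equivalence of underlying symmetric categorical groups, which follows immediately from (\ref{sinh0}) (together with (\ref{sinh1})): since $\pi_0(\Phi)=\Z$ is free, the Ext-term vanishes and the map induces isomorphisms on $\pi_0$ and $\pi_1$. If you replace your abstract comparison of types by this explicit morphism, your homotopy-group computations become exactly the verification needed, and the proof closes.
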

\begin{proof} Since $\Phi$ is a small projective generator the 2-category of
symmetric categorical groups is 2-equivalent to the category of
right categorical modules over the categorical ring ${\bf
Hom}(\Phi,\Phi)$. Observe that we have an obvious morphisms of
$2$-rings $\Phi\to {\bf Hom}(\Phi,\Phi)$ which sends the object $n$
to the endomorphism of $\Phi$ which on objects is given by $x\mapsto
nx$. It remains to show that this morphism of 2-rings is an
equivalence. But this easily follows from the exact sequence
\ref{sinh0}.
\end{proof}

\section{Injective objects}

In this section we prove the following result.

\begin{The}\label{inj}  The abelian category $\scgr$ as well as the abelian 2-category of 2-modules over a 2-ring
have enough injective objects.
\end{The}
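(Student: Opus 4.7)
The plan is to prove both statements in two stages: first reduce the 2-module case to $\scgr$ via a coinduction adjunction, then construct enough injectives in $\scgr$ by an explicit Baer-style construction exploiting the invariants functor $\type$.

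For the reduction, given a 2-ring $\R$, the forgetful 2-functor $U:\R\text{-2Mod}\to\scgr$ admits a right 2-adjoint $C(\S)=\mhom_\scgr(\R,\S)$ (the cofree 2-module on $\S$). Standard adjunction yoga shows that $U$ is 2-exact and preserves and reflects faithful morphisms, so $C$ sends injective symmetric categorical groups to injective $\R$-2-modules. Hence, given $M\in\R\text{-2Mod}$, any faithful embedding $UM\to J$ with $J\in\scgr$ injective transposes under the adjunction to a faithful embedding $M\to C(J)$ into an injective 2-module. This reduces the theorem to showing that $\scgr$ has enough injectives.

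For $\scgr$, given $\S$ with $\type(\S)=(A,B,k)\in\Ty$, embed $B=\pi_1(\S)$ into a divisible abelian group $D$ and consider the triple $\A=(A,D,k')\in\Ty$ where $k'$ is the composite $A/2A\xto{k}B\hookrightarrow D$. Since $\type:\Ho(\scgr)\to\Ty$ is part of a linear extension of categories (recalled in the paper from \cite{htype}), the triple $\A$ is realized by some $I=K(\A)\in\scgr$, and the $\Ty$-morphism $(\id_A,\iota):\type(\S)\to\A$ lifts to a morphism $j:\S\to I$; since its $\pi_1$-component is the inclusion $B\hookrightarrow D$, $j$ is faithful.

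The main obstacle is then to verify that $I$ (or a suitable enlargement of it) is injective. Concretely one checks that for every faithful $F:\T_1\to\T_2$ the induced map $\pi_0\mhom(\T_2,I)\to\pi_0\mhom(\T_1,I)$ is surjective. The exact sequence (\ref{sinh0}) reduces this to two problems: vanishing of the $\ext$-obstruction, which is immediate since $\pi_1(I)=D$ is divisible; and a $\Ty$-level extension problem, namely extending a homomorphism $\pi_1(\T_1)\to D$ along the monomorphism $\pi_1(F)$ subject to the $k$-invariant compatibility at $\T_2$ and at $I$. Divisibility of $D$ solves the underlying abelian-group extension, but compatibility with $k_{\T_2}$ is the delicate point: a naively chosen $I$ need not accommodate all of $\im(k_{\T_2})$. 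To overcome this, I would either replace $I$ by a product of cofree objects $\brk{D_\alpha}$ indexed over a set of test morphisms $\S\to\brk{D_\alpha}$ with $D_\alpha$ divisible (a 2-categorical Baer construction that absorbs all possible $k$-invariant relations), or else appeal to $\scgr$ being a 2-Grothendieck category — small coproducts and a generator by Theorem \ref{pi-sym}, together with filtered-colimit-exactness of tracks inherited from $\ab$ via (\ref{sinh0})–(\ref{sinh1}) — and invoke a 2-categorical analogue of Grothendieck's existence theorem for enough injectives. This last step is where the technical heart of the argument lies.
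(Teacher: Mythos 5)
Your first stage (the reduction of the 2-module case to $\scgr$ via the cofree 2-functor $\mhom_{\scgr}(\R,-)$, right adjoint to the exact forgetful 2-functor) is essentially the paper's own argument for that half; the only point you gloss over is the faithfulness of the transposed morphism, which the paper checks by factoring it as $\M\to\mhom(\R,\M)\to\mhom(\R,\Q)$ and verifying each piece faithful using the isomorphism (\ref{sinh1}).

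The second stage, however, has a genuine gap, and you have located it yourself without closing it: the object $I=K(A,D,k')$ you construct, with $\pi_0$ kept equal to $A=\pi_0(\S)$, is in general \emph{not} injective, and neither of your two fallback strategies is carried out (the ``2-Grothendieck category'' route is itself a substantial unproved theorem, nowhere available in this setting). The missing idea is to abandon the attempt to preserve $\pi_0$ and instead map into the \emph{cofree} object on the divisible group $D$: set $r(D)=({}_2D,\,D,\,\id)$ in $\Ty$. The adjunction-type isomorphism $\Ty(\A,r(D))\cong\hom(A_1,D)$ shows two things at once. First, injectivity of $r(D)$ in $\Ty$ reduces immediately to injectivity of $D$ in $\ab$ (faithful morphisms in $\Ty$ are exactly those injective on the $\pi_1$-component), and one shows conversely that every injective of $\Ty$ has this form; your worry about compatibility with $k_{\T_2}$ simply disappears, because a morphism into a cofree object is determined by its $\pi_1$-component with no condition imposed by the $k$-invariants. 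Second, the monomorphism $\pi_1(\S)\hookrightarrow D$ extends uniquely to a faithful morphism $\type(\S)\to r(D)$ in $\Ty$. To transfer all of this from $\Ty$ back to $\scgr$ one uses precisely the divisibility of $D$: the $\ext$-term in the exact sequence (\ref{sinh0}) vanishes for targets of the form $K(r(D))$, so $\pi_0\mhom(\S',K(r(D)))\cong\Ty(\type(\S'),r(D))$ for every $\S'$; this isomorphism both realizes the faithful $\Ty$-morphism as an actual morphism $\S\to K(r(D))$ and shows that every extension problem along a faithful morphism can be solved after applying $\type$ and lifted back, i.e.\ that $K(r(D))$ is injective in $\scgr$. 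Without this cofree construction (or an equivalent device killing the $\ext$-obstruction and the $k$-invariant constraint simultaneously), your argument stops exactly at the point you call the technical heart, so as it stands the proof of enough injectives in $\scgr$ is incomplete.
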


These are just part iv) and v) of Lemma \ref{adji} proved below.

A morphism $f=(f_0,f_1)$ in $\Ty$ is \emph{faithful} provided $f_1$
is injective and an object $\I=(I_0,I_1,\iota)$ of $\Ty$ is
\emph{injective} if for any faithful morphism $f:\A\to \B$ in $\Ty$
the induced map
$$\Ty(\B,\I)\to \Ty(\A,\I)$$ is surjective.
It is clear that a morphism $F:\S_1\to \S_2$ of symmetric
categorical groups is faithful iff $\type(F): \type(\S_1)\to
\type(\S_2)$ is faithful in $\Ty$. For an abelian group $M$ we
introduce two objects in $\Ty$:
$$r(M)=(_2M,M,id_{_2M}),$$
$$M[1]=(0,M,0).$$

\begin{Le}\label{adji} i) If $M$ is an abelian group and
$\A=(A_0,A_1,\al)$ is an object in $\Ty$, then one has the following
functorial isomorphism of abelian groups
$$\Ty(\A,r(M))=\hom(A_1,M).$$

ii) An object $\Q$ is an injective object in $\Ty$ iff it isomorphic
to the object of the form $r(Q)$ with divisible abelian group $Q$.

iii) For any divisible abelian group $Q$ the symmetric categorical
group $K(r(Q))$ is injective. Conversely, if $\S$ is an injective
categorical group then $\pi_1(\S)$ is a divisible abelian group and
$\S$ is equivalent to $K(r(\pi_1(\S)))$.

iv) The 2-category of symmetric categorical groups have enough
injective objects.

v) Let $\R$ be a categorical group. Then the category of categorical
right $\R$-modules have enough injective objects.

\end{Le}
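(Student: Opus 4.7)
My plan is to mirror the proof of Lemma \ref{adj}, with divisible abelian groups playing the role that free abelian groups played there, and with the functor $\type:\Ho(\scgr)\to\Ty$ together with the short exact sequence (\ref{sinh0}) transferring statements from $\Ty$ to $\scgr$.

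Part (i) is a direct unwinding. A morphism $(f_0,f_1):\A\to r(M)$ consists of $f_0:A_0\to{}_2M$ and $f_1:A_1\to M$ with $f_1\alpha=\bar{f_0}$. Since ${}_2M$ has exponent $2$, giving $f_0$ is the same as giving $\bar{f_0}:A_0/2A_0\to{}_2M$, and since $\alpha$ lands in ${}_2A_1$ the composite $f_1\alpha$ automatically takes values in ${}_2M$; hence $f_1$ determines $(f_0,f_1)$ and every $f_1$ arises. For the ``if'' half of part (ii) this bijection turns injectivity of $r(Q)$ against a faithful map (one with $f_1$ injective) into injectivity of $Q$ in $\ab$, which is equivalent to divisibility. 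For the converse, suppose $\Q=(Q_0,Q_1,\iota)$ is injective; testing this property with the morphism $(0,g):A[1]\to\Q$ along the faithful inclusion $A[1]\hookrightarrow B[1]$ coming from $A\subseteq B$ in $\ab$ shows $Q_1$ is divisible. The identity of $Q_1$ corresponds under (i) to a morphism $p:\Q\to r(Q_1)$ with $p_1=\id_{Q_1}$, which is faithful; injectivity of $\Q$ then yields $s:r(Q_1)\to\Q$ with $sp=\id_\Q$. Reading off $1$-components gives $s_1=\id_{Q_1}$, hence $(ps)_1=\id_{Q_1}$, and by the bijection in (i) this forces $ps=\id_{r(Q_1)}$. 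Thus $p$ is an isomorphism.

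Part (iii) is proved by transport along $\type$ using (\ref{sinh0}). For the ``if'' direction, fix divisible $Q$, a faithful $F:\S_1\to\S_2$ in $\scgr$, and $G:\S_1\to K(r(Q))$. Faithfulness in $\scgr$ means injectivity on $\pi_1$, so $\type(F)$ is faithful in $\Ty$, and by (ii) we find $\hat H:\type(\S_2)\to r(Q)$ with $\hat H\circ\type(F)=\type(G)$; surjectivity in (\ref{sinh0}) lifts $\hat H$ to some $H:\S_2\to K(r(Q))$. Then $[HF]$ and $[G]$ in $\pi_0\mhom(\S_1,K(r(Q)))$ share the same image under $\type$, so they differ by an element of $\ext(\pi_0\S_1,Q)$, which vanishes because $Q$ is divisible; hence a track $HF\then G$ exists. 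Running the argument in reverse---lifting a faithful map in $\Ty$ to $\scgr$ via $K$, applying injectivity of $\S$, and then pushing back down by $\type$---shows that any injective $\S\in\scgr$ yields an injective $\type(\S)\in\Ty$, whence by (ii), $\type(\S)\cong r(\pi_1\S)$ with $\pi_1\S$ divisible and $\S\simeq K(r(\pi_1\S))$.

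For part (iv), embed $\pi_1\S$ into a divisible group $Q$, extend by (i) to a faithful morphism $\type(\S)\to r(Q)$ in $\Ty$, and lift the latter via (\ref{sinh0}) to $\S\to K(r(Q))$; faithfulness is preserved because it is detected by $\type$. For part (v), I would invoke a Grothendieck-style coinduction: the forgetful 2-functor $U$ from right $\R$-2-modules to $\scgr$ has a right 2-adjoint $C=\mhom_\scgr(\R,-)$, whose right $\R$-action comes from left multiplication of $\R$ on itself. Since $U$ preserves faithful morphisms, $C$ sends injective symmetric categorical groups to injective $\R$-2-modules. Given $\sM$, embed $U\sM$ into an injective $I\in\scgr$ by (iv); the adjoint transpose $\sM\to C(I)$, which is faithful because the unit $\sM\to C(U\sM)$ is faithful (evaluation at $1\in\R$ splits its $\pi_1$-component), is the desired embedding. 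The main obstacle I anticipate is the bookkeeping in (v)---verifying that $C$ is a genuine 2-adjoint that preserves injectivity at the level of tracks, not merely on $\pi_0$; everything else follows the pattern set by Lemma \ref{adj}.
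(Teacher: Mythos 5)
Your proposal follows the paper's own proof essentially step for step: (i)--(ii) by the same unwinding of morphisms into $r(M)$ and the retraction argument in $\Ty$ (your finish via the bijection of (i) instead of the $0$-component diagram is an equivalent cosmetic variant), (iii)--(iv) by transport along $\type$ using that divisibility of $\pi_1$ kills the $\ext$-term in (\ref{sinh0}), and (v) by coinduction along $\mhom_{\scgr}(\R,-)$, right 2-adjoint to the forgetful 2-functor. The only compressed point is in (v): faithfulness of the transpose $\sM\to\mhom_{\scgr}(\R,I)$ does not follow from faithfulness of the unit alone --- you also need that $\mhom_{\scgr}(\R,-)$ preserves faithful morphisms (immediate from (\ref{sinh1}), which is how the paper argues), or equivalently your evaluation-at-$1$ observation applied to the whole transpose rather than only to the unit.
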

\begin{proof} i) Assume $g=(g_0,g_1):\A\to r(M)$ is a morphism in $\Ty$ i in
$\Ty$. So $g_0:A_0\to _2M$ and $g_1:A_1\to M$ are homomorphisms of
abelian groups and we have a commutative diagram:
$$\xymatrix{A_0\ar[r]^{\al}\ar[d]_{g_0}&_2A_1\ar[r]^i\ar[d]_{\bar{g_1}}&A_1\ar[d]^{g_1}\\
_2M\ar[r]^{\id}& _2M\ar[r]^j& M}
$$
where $\bar{g_1}$ is induced by $g_1$ and $i,j$ are inclusions. It
follows that $g_0$ is completely determined by $g_1$ and the result
follows.

ii) Let $Q$ be a divisible abelian group and let $\A\to\B$ be a
faithful morphism in $\Ty$. Thus $A_1\to B_1$ is a monomorphism.
Since $Q$ is an injective object in $\ab$ it follows that
$\hom(B_1,Q)\to\hom(A_1,Q)$ is an epimorphism of abelian groups. So
by i) the map $\Ty(\B, r(Q))\to \Ty(\A, r(Q))$ is surjective. Thus
$r(Q)$ is injective in $\Ty$.

Conversely, assume $\Q=(Q_0,Q_1,\chi)$ is an injective in $\Ty$. We
claim that $Q_1$ is a divisible abelian group. In fact it suffice to
show that it is an injective object in the category $\ab$. Take any
monomorphism of abelian groups $f_1:A\to B$ and any homomorphism of
abelian groups $g_1:A_1\to Q_0$. We have to show that $g_1$ has a
lift to $B_1$. Observe that $f=(0,f_1):A[1]\to B[1]$ is faithful in
$\Ty$ and $g=(0,g_1):A[1]\to \Q$ is a well-defined morphism in
$\Ty$. By assumption there exists a morphism $\tilde{g}:B[1]\to \Q$,
It is clear that $\tilde{g}=(0,\tilde{g_1})$ for some
$\tilde{g_1}:B\to Q_1$. Thus $Q_1$ is a divisible abelian group.
Thus $r(Q_1)$ is an injective object in $\Ty$. By i) the identity
map defines a canonical morphism $i=(i_0, \id_{Q_1}):\Q\to r(Q_1)$,
which obviously is faithful in $\Ty$. Since $\Q$ is injective it
follows that there exist a morphism $q=(q_0,\id_{Q_1}):r(Q_1)\to \Q$
such that $q\circ i=\id_\Q$. Thus we have a commutative diagram
$$\xymatrix{Q_0\ar[r]^{\chi}\ar[d]^{i_0} &_2Q_1\ar[d]^{\id}\\
_2Q_1\ar[r]^{\id}\ar[d]^{q_0}& _2Q_1\ar[d]^\id\\
Q_0\ar[r]^{\chi}&_2Q_1}$$ with $q_0i_0=\id_{Q_0}$. It follows that
$i_0$ is an isomorphism. Hence $i:\Q\to r(Q_1)$ is an isomorphism
and we are done.

iii) Let $F:\S_1\to \S_2$ be a faithful morphism of symmetric
categorical groups and $G:\S_1\to K(r(Q))$ be a morphism of
symmetric categorical groups. Apply the functor $\type$ to get
morphisms $\type(F):\type(\S_1)\to \type(\S_2)$ and $\type(G):
\type(\S_1)\to r(Q))$ in $\Ty$. Since
$\pi_1(F):\pi_1(\S_1)\to\pi_1(\S_2)$ is a monomorphism of abelian
groups it follows that $\type(F):\type(\S_1)\to \type(\S_2)$ is a
faithful morphism in $\Ty$. Since $Q$ is a divisible abelian group,
$r(Q)$ is injective in $\Ty$ by ii) and we can extend $\type(F)$ to
get a morphism $\hat{g}: \type(\S_2)\to r(Q)$. Thus we have the
equality $\hat{g}\circ \type(F)=\type(G)$ in $\Ty(\type(\S_1),
r(Q))$. Since $Q=\pi_1(K(r(Q))$ is divisible abelian group the
Ext-term in the exact sequence (\ref{sinh0}) disappears and we get
the isomorphism
\begin{equation}\label{sinh2i}\pi_0({\bf Hom}(\S_i,K(r(Q)) ))\cong
\Ty(\type(\S_1), r(Q)), \ \ i=0,1
\end{equation}
Take a morphism $L: \S_2\to K(r(Q))$ of symmetric categorical groups
which corresponds to the morphism $\hat{g}:\type(\S_2)\to r(Q)$. By
our construction one has an equality $\type(LF)=\type(G)$. This is
equality in $\pi_0({\bf Hom}(\S_1,K(r(Q))))$, which imply that the
classes of $LF$ and of $G$ in $\pi_0({\bf Hom}(\S_1,K(r(Q))))$ are
the same. Thus there exist a track from $LF$ to $G$. This shows that
$K(r(Q))$ is an injective symmetric categorical group. Conversely
assume $\S$ is an injective symmetric categorical group. Since $\S$
and $K(\type(\S))$ are equivalent, it follows that $K(\type(\S))$ is
also projective. We claim that $\type(\S)$ is injective in $\Ty$. In
fact take any faithful morphism $f=(f_0,f_1): \A\to\B$ in $\Ty$ and
any morphism $g:\A\to \type(\S)$ in $\Ty$. Then $K(f):K(\A)\to
K(\B)$ is faithful in $\scgr$. Hence for $K(g):K(\A)\to
K(\type(\S))$ we have a morphism $\tilde{G}:K(\B)\to K(\type(\S))$
and a track $ \tilde{G}\circ K(f)\to K(g)$. Thus $\tilde{G}\circ
K(f)= K(g)$ in $\pi_0({\bf Hom}(K(\type(\S)),K(\B)))$. Now apply the
functor $\type$ to get the equality $\type(\tilde{G})\circ f=g$,
showing that $\type(\S)$ is injective in $\Ty$. Hence $\type(\S)$ is
isomorphic to $r(Q)$ for a divisible abelian group $Q$. Thus $\S$
and $K(r(Q)$ are equivalent.

iv)  Let $\S$ be a symmetric categorical group. Choose a divisible
abelian group $Q$ and monomorphism of abelian groups
$f_1:\pi_1(\S)\to Q$. By Lemma \ref{adji} it has a unique extension
to a morphism $f=(f_0,f_1):\type(\S)\to r(Q)$ which is essentially
surjective. Since $Q$ is divisible abelian group, we have the
isomorphism (\ref{sinh2i}), which show that there exist a morphism
of symmetric categorical groups $ \S\to K(r(Q))$ which realizes
$f_1$ on the level of $\pi_1$ and we get the result.

v) We consider the 2-functor ${\bf Hom}(\R,-)$ from the 2-category
of symmetric categorical groups to the 2-category of categorical
right $R$-modules. It is a right 2-adjoint to the forgetful
2-functor. Since the forgetful functor is exact it follows that the
2-functor ${\bf Hom}(\R,-)$ takes injective objects to injective
ones. Let $\M$ be a categorical left $\R$-module. Choose a faithful
morphism $\M\to \Q$ in the 2-category of symmetric categorical
groups with injective symmetric categorical group $\Q$. Apply now
the 2-functor ${\bf Hom}(\R,-)$. It follows from the isomorphism
(\ref{sinh1}) that ${\bf Hom}(\R,\M)\to {\bf Hom}(\R,\Q)$ is a
faithful morphism of right $\R$-modules. By the same reasons the
obvious morphism $\M\to {\bf Hom}(\R,\M)$ is also faithful. Taking
the composite we obtain a faithful morphism $\M\to {\bf Hom}(\R,\Q)$
and hence the result.
\end{proof}

\section{On cohomology of categorical groups} Let $\G$ be a
categorical group. Recall that a \emph{$\G$-module} \cite{lh},
\cite{ulb} (or \emph{2-representation of $\G$})  is a symmetric
categorical group $\S$ together with a homomorphism of categorical
groups $\G\to {\mathcal Eq}(\S)$, where ${\mathcal Eq}(\S)$ is the
categorical group of symmetric monoidal autoequivalences of $\S$. In
the case, when $\G$ is discrete, Ulbrich \cite{ulb} defined the
cohomology groups $H^*_U(\G,\S)$. Moreover in \cite{lh},\cite{on h1}
the authors considered even more general situation when $\G$ is
arbitrary and $\S$ is assumed  to be only braided and they managed
to define the categorical groups ${\mathcal H}^i(\G,\S)$ in
dimensions $i=0$ and $i=1$.

For symmetric categorical groups one can define ${\mathcal
H}^i(\G,\S)$ for all $i$. For discrete $\G$ the connected components
of ${\mathcal H}^i(\G,\S)$ are exactly the Ulbrich's groups. The
main result of this section claims that there is an equivalence
between ${\mathcal H}^*(\G,\S)$ and appropriate ext in the abelian
2-category of $\G$-modules.

For any symmetric categorical group $\S$, we let $M(\G,\S)$ be the
symmetric categorical group of all functors from $\G$ to $\S$. In
fact $M(\G,\S)$ has a natural $\G$-module structure, induced by the
categorical group structure on $\G$. If $\S$ is a $\G$-module, then
there is a canonical morphism of $\G$-modules $$i_\S:\S\to
M(\G,\S)$$ which takes an object $a\in \S$ to the constant functor
with value $a$. Since
$$\pi_1M(\G,\S)=Maps(\pi_0(\G),\pi_1(\S))$$
we see that $i_\S$ is faithful.

For a categorical group $\G$ we let $Ner_2(\G)$ be the nerve of $\G$
as it is defined in \cite{cc1}. For a $G$-module $\S$ we let
$C^n(\G,\S)$ be the symmetric categorical group
$\prod_{Ner_2(\G)_n}\S$.
 Similarly to
the classical case, there is a 2-cochain  complex structure on
$C^*(\G,\S)$ (details can be find in \cite{car_m}). The secondary
cohomology of this complex is denoted by ${\mathcal H}^*(\G,\S)$,
while the Takeuchi-Ulbrich cohomology is denoted by $H^*_U(\G,\S)$.
By our definition the symmetric categorical groups ${\mathcal
H}^0(\G,\S)$ and ${\mathcal H}^1(\G,\S)$ coincides with one defined
in \cite{lh},\cite{on h1}, while for discrete $\G$  the groups
$H^*_U(\G,\S)$ are the same as in \cite{ulb}.

We wish to relate these objects to the secondary ext.  To do so,  we
observe that the 2-category $\scgr_\G$ of $\G$-modules  have enough
injective objects. In fact if $\Q$ is an injective object in
$\scgr$, considered as a trivial $\G$-module, then $M(\G,Q)$ is
injective in $\scgr_G$. Actually the 2-category $\scgr_\G$ of
$G$-modules has a small projective generator $M_f(\G,\Phi)$, and
hence is 2-equivalent to the 2-category of modules over a 2-ring. We
will not need this fact, therefore we omit the proof of this fact.


\begin{The} Let $\G$ be a categorical group and $\S$ be a $\G$-module, then
$${\mathcal H}^*(\G,\S)\cong {\bf Ext}^*_{\scgr_\G}(\Phi,\S)$$
where action of $\G$ on $\Phi$ is trivial.
\end{The}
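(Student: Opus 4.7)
The plan is to apply Proposition~\ref{R-tv}(ii), which characterises right derived 2-functors by a $\delta$-functor axiom. Define 2-functors $T^n := \mathcal{H}^n(\G, -) : \scgr_\G \to \scgr$ for $n \in \Z$ and verify the four conditions needed to identify them with ${\bf Ext}^n_{\scgr_\G}(\Phi, -)$.

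First, I would identify $T^0$ with ${\bf Hom}_{\scgr_\G}(\Phi, -)$. Because $\Phi$ is the free symmetric categorical group on one generator (Theorem~\ref{pi-sym}) and $\G$ acts trivially on $\Phi$, a morphism $\Phi \to \S$ of $\G$-modules is the same datum as a $\G$-fixed element of $\S$, which is exactly the secondary kernel of the first coboundary $C^0(\G,\S) \to C^1(\G,\S)$. This identifies $\mathcal{H}^0(\G, \S)$ with ${\bf Hom}_{\scgr_\G}(\Phi, \S)$, as required.

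Second, I would check the $\delta$-functor structure. The 2-functor $\S \mapsto C^n(\G, \S) = \prod_{Ner_2(\G)_n} \S$ preserves extensions, since products of 2-exact sequences in $\scgr$ are 2-exact. Hence any extension in $\scgr_\G$ yields an extension of 2-cochain complexes, and the long 2-exact sequence of secondary cohomologies recalled in Section~\ref{2-complex} supplies the connecting morphisms and exactness required. Third, one must establish the acyclicity condition on injectives: for $\mathcal{I}$ injective in $\scgr_\G$, we need $T^n(\mathcal{I}) = 0$ for $n > 1$ and $\pi_0 T^1(\mathcal{I}) = 0$. As in the proof of Lemma~\ref{adji}(v), every injective $\G$-module sits as a retract of one of the form $M(\G, \mathcal{Q})$ with $\mathcal{Q}$ injective in $\scgr$; it therefore suffices to establish the vanishing for such co-induced modules, where the cochain complex $C^*(\G, M(\G, \mathcal{Q}))$ should admit the categorification of the classical simplicial contraction coming from the extra degeneracy in the cobar construction, providing a 2-chain null-homotopy that witnesses the required vanishing.

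The main obstacle is this third step. Classically the contracting homotopy on the cobar complex of an induced module is immediate, but in the 2-categorical world one must produce not only maps but coherent tracks (and tracks between tracks) realising the simplicial identities up to homotopy, and then check that the resulting 2-chain null-homotopy gives the correct statement at the level of secondary cohomology. Care is also needed to calibrate the vanishing precisely: $\mathcal{H}^1(\G, \mathcal{I})$ need not be the zero object, only its $\pi_0$ must vanish, which mirrors the general fact from the previous section that ${\bf Ext}^1$ on injectives is concentrated in $\pi_1$.
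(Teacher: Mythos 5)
Your proposal follows essentially the same route as the paper: identify $\mathcal{H}^0(\G,-)$ with ${\bf Hom}_{\scgr_\G}(\Phi,-)$ using ${\bf Hom}_{\scgr}(\Phi,\S)\cong\S$, get the $\delta$-functor structure from the fact that products of 2-exact sequences are 2-exact, and dispose of injectives by realizing them as retracts of co-induced modules $M(\G,-)$ (via faithfulness of $i_\S$ and Corollary 4.4 of Bourn--Vitale) and contracting $C^*(\G,M(\G,-))$ by evaluation at the unit, then invoking the axiomatic characterization of derived 2-functors. The step you flag as the main obstacle is exactly the point the paper settles by asserting that this evaluation-at-unit map is an explicit homotopy equivalence with the complex concentrated in degree zero, which also yields the required $\pi_0$-vanishing of $\mathcal{H}^1$ on injectives.
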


\begin{proof} By iv) Proposition \ref{L-tv} it suffices to prove the
following assertions

i) ${\bf Hom}_{\scgr_\G}(\Phi,-)\cong {\mathcal H}^0(\G,-)$,

ii) If
$$ \xymatrix{
\S_1\ar[r]^{f}\rruppertwocell<12>^{0}{^\hskip-2ex\alpha}
&\S_2\ar[r]^{g} &\S_3}
$$
is an extension in $\scgr_\G$, then
$$ \xymatrix{
C^n(\G,\S_1)\ar[r]^{f}\rruppertwocell<12>^{0}{^\hskip-2ex\alpha}
&C^n(\G,\S_2)\ar[r]^{g} &C^n(\G,\S_3)}
$$
is also an extension.

iii) If $\S$ is an injective object in $\scgr_\G$, then ${\mathcal
H}^n(\G,\S)=0$ for $n>1$.

The assertion i) is easy consequence of the fact that $${\bf
Hom}_{\scgr}(\Phi,\S)\cong \S.$$  To see the ii) one has to use the
fact that the product of 2-exact sequences is also 2-exact and
 to show iii) one has to consider the canonical morphism $i_\S:S\to
M(\G,\S)$. Since $\S$ is injective and $i_S$ is faithful it follows
from  Corollary 4.4 \cite{ext-b-v} that $S$ is equivalent to a
direct summand of $M(\G,\S)$. Hence it suffice to show that for any
$\G$-module $\S$ one has ${\mathcal H}^n(\G,M(\G,\S))=0$ for $n>1$.
Since 'evaluation at unite' gives an explicit homotopy equivalence
between $\S$ considered as a 2-chain complex concentrated in
dimension zero and $C^*(\G,\S)$, the result follows.
\end{proof}

\end{document}